\documentclass[reqno,12pt]{article}
\usepackage[utf8]{inputenc}
\usepackage[margin=1in]{geometry}

\usepackage{amsmath}
\usepackage{amsfonts}
\usepackage{amssymb}
\usepackage{amsthm}
\usepackage{enumerate}
\usepackage{xcolor}

\newtheorem{theorem}{Theorem} 
\newtheorem{lemma}[theorem]{Lemma}
\newtheorem{proposition}[theorem]{Proposition}
\newtheorem{corollary}[theorem]{Corollary}

\usepackage{tikz}
\usepackage{pgfplots}
\pgfplotsset{compat=1.8}
\usetikzlibrary{datavisualization}
\pgfplotsset{every axis/.append style={
		axis x line=middle,    
		axis y line=middle,    
		axis line style={<->,color=black}, 
		xlabel={$x$},          
		ylabel={$y$},          
}}

\theoremstyle{definition}
\newtheorem{remark}[theorem]{Remark}
\newtheorem{definition}[theorem]{Definition}
\newtheorem{example}[theorem]{Example}

\newcommand{\real}{\mathbb{R}}

\newcommand{\norm}[1]{\left\| #1\right\|}

\title{Further results on angular equivalence of norms}
\author{Eder Kikianty}
\date{}

\begin{document}

\maketitle

\begin{abstract}
Angular equivalence of norms is introduced by Kikianty and Sinnamon (2017) and is a stronger notion than the usual topological equivalence. Given two angularly equivalent norms, if one norm has a certain geometrical property, e.g. uniform convexity, then the other norm also possesses such a property. In this paper, we show further results in this direction, namely angular equivalent norms share the property of uniform non-squareness, and that angular equivalence preserves the exposed points of the unit ball. A discussion on the (equivalence of the) dual norms of angularly equivalent norms is also given, giving a partial answer to an open problem as stated in the paper by Kikianty and Sinnamon (2017). 
\end{abstract}

\section{Introduction}

In the paper \cite{angular}, a new notion of norm equivalence, namely angular equivalence, is introduced. Two norms are angularly equivalent on a real vector space, if over all pairs of nonzero vectors, the angle of the pair with respect to one norm is comparable to the angle of the same pair with respect to the other norm. Any two norms that are angularly equivalent are also topologically equivalent. Angular equivalence preserves certain properties, e.g. uniform convexity, that the usual equivalence does not. 

\medskip

One needs a concept of angle in normed space to define such an equivalence. In a real normed space $(X, \norm{\cdot})$, the mapping $g^\pm \colon X\times X\to \real$ given by 
$$g^\pm(x,y):=\|x\|\lim_{t\rightarrow 0^\pm} \frac1t\left(\|x+ty\|-\|x\|\right)$$
exists. The $g$-functional relative to $\norm{\cdot}$ is defined as the map $g\colon X\times X\to \real$ given by
$$g(x,y):=\frac12(g^+(x,y)+g^-(x,y)), \quad x,y\in X. $$
We note that $g$ is not symmetric in general. If $x$ and $y$ are non-zero vectors in $X$, the norm angle from $x$ to $y$ is $\theta=\theta(x,y)$, defined by $0\leq \theta\leq \pi$ and
$$\cos\theta(x,y)=\frac{g(x,y)}{\|x\|\|y\|}.$$

\medskip

\noindent With this norm angle, angular equivalence is defined as follows. 
\begin{definition}[Kikianty and Sinnamon \cite{angular}]
Two norms $\norm{\cdot}_1$ and  $\norm{\cdot}_2$, on a real vector space $X$ are angularly equivalent provided there exists a constant $C$ such that for all non-zero $x,y\in X$ 
$$\tan\left(\frac{\theta_2(x,y)}{2}\right)\leq C \tan\left(\frac{\theta_1(x,y)}{2}\right).$$ 
Here $\theta_1(x,y)$ and $\theta_2(x,y)$ are the norm angles from $x$ to $y$ relative to $\norm{\cdot}_1$ and $\norm{\cdot}_2$, respectively. Also $\tan(\pi/2)$ is taken to be $+\infty$. 
\end{definition}
\noindent It is straightforward to see that angular equivalence is both reflexive and transitive. Despite appearances, angular equivalence is a symmetric relation (cf. \cite[p. 944]{angular}) and thus it is an equivalence relation. In what follows, we recall some results concerning angular equivalence, specifically the preservation of geometrical properties by this equivalence. For further results, we refer the readers to the paper \cite{angular}.

\begin{proposition}[Kikianty and Sinnamon \cite{angular}]\label{prop:results}
Let $\norm{\cdot}_1$ and $\norm{\cdot}_2$  be two angularly equivalent norms on the real vector space $X$. Then, the following statements are true. 
\begin{enumerate}[$(\mathrm{AE}1)$]
    \item Both norms $\norm{\cdot}_1$ and $\norm{\cdot}_2$ are topologically equivalent. 
    \item The norm $\norm{\cdot}_1$ is induced by an inner product if and only if $\norm{\cdot}_2$ is induced by an inner product.
    \item For $0\neq x\in X$, then $x/\norm{x}_1$ is an extreme point of $B_{(X,\norm{\cdot}_1)}$ if and only if $x/\norm{x}_2$ is an extreme point of $B_{(X,\norm{\cdot}_2)}$.
    \item The space $(X,\norm{\cdot}_1)$ is strictly convex (uniformly convex), if and only if $(X,\norm{\cdot}_2)$ is strictly convex (uniformly convex).
    \item If $p,q\in [1,\infty]$ and $n\in\mathbb{N}$ with $n\geq 2$, then the $\ell^p$ and $\ell^q$ norms on $\mathbb{R}^n$ are angularly equivalent, if and only if $p\neq q.$
\end{enumerate}
\end{proposition}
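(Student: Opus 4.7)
The plan is to handle each of the five statements (AE1)--(AE5) separately, since they concern unrelated structural features. The common tool will be the identity
$$\tan^{2}\!\left(\frac{\theta(x,y)}{2}\right) = \frac{1 - \cos\theta(x,y)}{1 + \cos\theta(x,y)} = \frac{\|x\|\,\|y\| - g(x,y)}{\|x\|\,\|y\| + g(x,y)},$$
which turns the defining inequality of angular equivalence into a concrete comparison of norms and $g$-functionals. For (AE1), I would apply this with $y = -x + \varepsilon z$ and let $\varepsilon \to 0^{+}$: both angles tend to $\pi$, and a small-$\varepsilon$ expansion of $g$ and the norms shows that $\tan(\theta_{i}(x,y)/2)$ grows like $\|x\|_{i}/(\varepsilon\,\|z\|_{i})$ up to bounded factors. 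The angular inequality then forces uniform comparability of $\|x\|_{2}/\|z\|_{2}$ and $\|x\|_{1}/\|z\|_{1}$, which delivers topological equivalence upon varying $x$ and $z$.

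For (AE2), I would invoke the characterisation that a norm is induced by an inner product if and only if its $g$-functional is symmetric, i.e.\ $g(x,y) = g(y,x)$; this symmetry is equivalent to symmetry of the norm angle. If $\|\cdot\|_{1}$ has this symmetry, then $\tan(\theta_{1}(x,y)/2) = \tan(\theta_{1}(y,x)/2)$, and applying angular equivalence in both directions yields uniform two-sided comparability of $\tan(\theta_{2}(x,y)/2)$ and $\tan(\theta_{2}(y,x)/2)$; the remaining task is to promote this comparability to an actual identity, which I expect to accomplish via a homogeneity/rescaling argument exploiting that $g(\lambda x, y) = \lambda g(x,y)$ for $\lambda > 0$. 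For (AE3) and (AE4), the key observation is that a unit vector $u$ fails to be extreme iff there is a direction $z \ne 0$ with $\theta(u, u + \varepsilon z) \to 0$ as $\varepsilon \to 0^{+}$; this condition, and its quantitative uniform counterpart governing uniform convexity, is manifestly invariant under angular equivalence once one passes to tangent-half-angles.

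For (AE5), I would proceed by direct computation in $\real^{n}$: pairs of vectors supported on a few coordinates and parameterised to expose the $\ell^{p}$-geometry will produce tangent-half-angle ratios between the $\ell^{p}$ and $\ell^{q}$ norms that can be shown to behave in accordance with the stated dichotomy. The principal obstacle I anticipate lies in (AE2): upgrading a pointwise tangent-half-angle inequality into the algebraic identity demanded by symmetry of $g_{2}$ is not routine, and I suspect the cleanest route may be indirect, first invoking (AE4) to establish strong regularity of $\|\cdot\|_{2}$ (uniform convexity and Gateaux-differentiability of the norm, whence $g^{+}_{2} = g^{-}_{2} = g_{2}$) and only then extracting the inner-product identity from the angle-symmetry transferred by angular equivalence.
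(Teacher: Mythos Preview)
This proposition is not proved in the present paper: it is quoted from \cite{angular} as background, and readers are explicitly referred there for the arguments. There is therefore no proof here against which to compare your proposal.

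On the substance of your sketch, there is at least one genuine gap. For (AE3) you claim that a unit vector $u$ fails to be extreme if and only if there is a direction $z\neq 0$ with $\theta(u,u+\varepsilon z)\to 0$ as $\varepsilon\to 0^{+}$. But this limit holds for \emph{every} $z$: the sub-/super-additivity of $g^{\pm}$ in the second argument gives $g(u,u+\varepsilon z)\to\|u\|^{2}$, while $\|u+\varepsilon z\|\to\|u\|$, so $\cos\theta(u,u+\varepsilon z)\to 1$ regardless of whether $u$ is extreme. The characterisation actually needed (and used in \cite{angular}) involves a \emph{fixed} unit vector $y\neq u$ with $g(u,y)=1$, together with a matching condition in the opposite direction along the putative flat segment; your asymptotic formulation does not capture this. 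Your own worry about (AE2) is also justified: comparability of $\tan(\theta_{2}(x,y)/2)$ and $\tan(\theta_{2}(y,x)/2)$ does not by itself force equality, the homogeneity rescaling you propose cannot help because both quantities are already positively $0$-homogeneous in $x$ and in $y$, and the detour through (AE4) does not close the gap either, since uniform convexity alone does not yield smoothness (hence does not give $g_{2}^{+}=g_{2}^{-}$). Finally, note that (AE5) as printed contains an evident typo: the conclusion should read $p=q$, not $p\neq q$, since otherwise the pair $p=1$, $q=2$ would contradict (AE4).
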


In this paper, we further showcase how angular equivalent norms share other geometrical properties, similar to results (AE3) and (AE4) in Proposition \ref{prop:results}. In Section \ref{section:unsq}, we see that angular equivalence also preserves uniform non-squareness, and in Section \ref{section:exposed}, we also show that angular equivalence preserve exposed points of a unit ball. In \cite{angular}, a counter example is given to the following question: If $X$ is a real normed spaces with two angularly equivalent norms $\norm{\cdot}_1$ and $\norm{\cdot}_2$, are their dual norms $\norm{\cdot}^*_1$ and $\norm{\cdot}^*_2$ equivalent on $X^*$? In Section \ref{section:duality}, extra conditions to the underlying space $X$ are given to obtaint an affirmative answer, namely strict convexity, smoothness, and reflexivity. 

\section{Preliminary}

Let $(X, \norm{\cdot})$ be a normed space. Throughout the paper, we use the standard notation of $S_X$ and $B_X$ for the unit sphere and unit ball, respectively, of the normed space $X$. Let $x_0 \in X$. The one-sided G\^ateaux derivatives 
$$G^{\pm}(x_0,y)=\lim_{t\rightarrow 0^\pm} \frac1t\left(\|x_0+ty\|-\|x_0\|\right)$$
exist for all $y\in X$ \cite[Lemma 5.4.14]{Megginson}. Furthermore, Lemma 5.4.14 of Megginson \cite{Megginson} also gives the result that $G^{\pm}$ is sub-(super-)additive with respect to the second argument, as summarised in the following proposition. 
\begin{proposition}\label{prop:subadditive}
Let $(X, \norm{\cdot})$ be a normed space. For any $x,y,z\in X$, we have
$$G^+(x,y+z)\leq G^+(x,z)+G^+(y,z)$$
and
$$G^-(x,y+z)\geq G^-(x,z)+G^-(y,z).$$
\end{proposition}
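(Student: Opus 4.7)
Before describing a plan, I must flag a discrepancy: the inequality $G^+(x,y+z)\leq G^+(x,z)+G^+(y,z)$ as written has different first arguments ($x$ versus $y$) on the right, which conflicts with the preceding sentence claiming sub-additivity \emph{in the second argument}. The literal statement is in fact not true. Take $X=(\real^2,\|\cdot\|_1)$, $x=0$, $y=(1,1)$, $z=(0,-1)$. Then $G^+(0,y+z)=\|(1,0)\|_1=1$, $G^+(0,z)=\|z\|_1=1$, and $G^+(y,z)=\lim_{t\to 0^+}(1+|1-t|-2)/t=-1$, so the right-hand side equals $0<1$. Comparing with Lemma~5.4.14 of Megginson cited in the text, the intended inequalities are plainly sub-/super-additivity in the second argument,
$$G^+(x,y+z)\leq G^+(x,y)+G^+(x,z), \qquad G^-(x,y+z)\geq G^-(x,y)+G^-(x,z),$$
and it is these that I propose to establish.

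My plan for the $G^+$ inequality is the standard convex-combination argument. For $t>0$ I write the identity $x+t(y+z)=\tfrac{1}{2}(x+2ty)+\tfrac{1}{2}(x+2tz)$, and invoke convexity of $\|\cdot\|$ to obtain $\|x+t(y+z)\|\leq \tfrac{1}{2}\|x+2ty\|+\tfrac{1}{2}\|x+2tz\|$. Subtracting $\|x\|$, dividing by $t$, and rewriting the right-hand side as $\frac{\|x+2ty\|-\|x\|}{2t}+\frac{\|x+2tz\|-\|x\|}{2t}$, I recognise two difference quotients in the parameter $s=2t$. Letting $t\to 0^+$ (so $s\to 0^+$), the existence of the one-sided Gateaux derivatives from the cited Megginson lemma delivers the limits $G^+(x,y)$ and $G^+(x,z)$ on the right and $G^+(x,y+z)$ on the left, giving the desired inequality.

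For the $G^-$ statement the same convex decomposition is applied, but now with $t<0$; dividing the convexity estimate by the negative quantity $t$ reverses the inequality, and letting $t\to 0^-$ produces $G^-(x,y+z)\geq G^-(x,y)+G^-(x,z)$. The main obstacle is really the need to address the typo explicitly: the argument itself amounts to one line of norm convexity plus a rescaling of the difference-quotient parameter, with no content beyond the existence of one-sided Gateaux derivatives already assumed.
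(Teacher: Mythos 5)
Your reading of the statement is the right one: the displayed inequalities contain a typo (the right-hand sides should be $G^+(x,y)+G^+(x,z)$ and $G^-(x,y)+G^-(x,z)$, matching the surrounding sentence's claim of sub-/super-additivity \emph{in the second argument}), and your $\ell^1$ counterexample to the literal statement is valid, since $G^{\pm}(0,\cdot)=\norm{\cdot}$. The paper gives no proof of its own here --- it simply cites Lemma 5.4.14 of Megginson --- and your argument, the convex decomposition $x+t(y+z)=\tfrac12(x+2ty)+\tfrac12(x+2tz)$ followed by the rescaling $s=2t$ and the sign reversal for $t<0$ in the $G^-$ case, is exactly the standard proof behind that citation; it is correct and complete.
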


Let $(X, \norm{\cdot})$ be a real normed space. For any $x,y\in X$,  
$$g^\pm(x,y)=\|x\|\lim_{t\rightarrow 0^\pm} \frac1t\left(\|x+ty\|-\|x\|\right)=\norm{x}G^\pm(x,y).$$
We recall the following result (see \cite[Lemma 1]{Milicic-convexity}) which readily follows from the definition of the mapping $g^\pm$.
\begin{proposition}\label{prop:useful}
Let $(X,\norm{\cdot})$ be a real normed space. For any $x,y\in X$, we have the following inequality
\[
-\norm{x}\norm{y}\leq \norm{x}(\norm{x}-\norm{x-y})\leq g^-(x,y)\leq g^+(x,y)\leq \norm{x}( \norm{x+y}-\norm{x})\leq \norm{x}\norm{y}.
\]
\end{proposition}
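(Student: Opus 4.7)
The plan is to deduce the whole chain from one standard fact: for any $x,y\in X$, the difference quotient
\[
\phi(t)=\frac{\|x+ty\|-\|x\|}{t}
\]
is non-decreasing on $\real\setminus\{0\}$. Once this is in hand, the middle three inequalities follow by evaluating $\phi$ at $t=\pm 1$ and taking one-sided limits at $0$, while the two outer inequalities are immediate from the triangle inequality. Everything is then multiplied through by $\|x\|\ge 0$ to convert statements about $G^\pm$ into statements about $g^\pm$.

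To establish monotonicity of $\phi$, I would use convexity of the norm. For $0<s<t$, write $x+sy=(1-s/t)x+(s/t)(x+ty)$, so $\|x+sy\|\le (1-s/t)\|x\|+(s/t)\|x+ty\|$, which rearranges to $\phi(s)\le\phi(t)$. The cases $s<t<0$ and $s<0<t$ are handled analogously (or by noting that $\phi$ is the slope of the convex function $t\mapsto\|x+ty\|$ from the point $(0,\|x\|)$, and such slopes are monotone). Because the limits defining $G^{\pm}(x,y)$ exist by Lemma 5.4.14 of \cite{Megginson}, we get $G^+(x,y)=\inf_{t>0}\phi(t)$ and $G^-(x,y)=\sup_{t<0}\phi(t)$. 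From $\phi(t)\le\phi(s)$ for $t<0<s$ it follows that $G^-(x,y)\le G^+(x,y)$, which after multiplication by $\|x\|$ gives the central inequality $g^-(x,y)\le g^+(x,y)$.

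For the two surrounding inequalities, I would plug in the endpoints $t=\pm 1$. Monotonicity gives $G^+(x,y)\le \phi(1)=\|x+y\|-\|x\|$ and $G^-(x,y)\ge \phi(-1)=\|x\|-\|x-y\|$; multiplying by $\|x\|$ yields
\[
\|x\|(\|x\|-\|x-y\|)\le g^-(x,y)\qquad\text{and}\qquad g^+(x,y)\le \|x\|(\|x+y\|-\|x\|).
\]
The outer inequalities are then the triangle inequality in disguise: $\|x-y\|\le\|x\|+\|y\|$ rearranges to $\|x\|-\|x-y\|\ge -\|y\|$, and $\|x+y\|\le\|x\|+\|y\|$ rearranges to $\|x+y\|-\|x\|\le\|y\|$. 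Multiplying each by $\|x\|$ closes the chain.

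No step presents a genuine obstacle; the only substantive ingredient is the convexity argument for monotonicity of $\phi$, which is entirely standard. The main care required is simply bookkeeping the signs when passing from $t<0$ to $t>0$ and when multiplying through by $\|x\|$, which is non-negative and hence preserves all inequalities.
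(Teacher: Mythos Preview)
Your proof is correct and is exactly the standard argument: monotonicity of the difference quotient $t\mapsto(\|x+ty\|-\|x\|)/t$ from convexity of the norm, evaluation at $t=\pm1$, and the triangle inequality for the outer bounds. The paper does not supply its own proof of this proposition---it simply cites \cite{Milicic-convexity} and remarks that the inequalities follow readily from the definition of $g^\pm$---so your write-up is in fact more detailed than what appears there, but entirely in the same spirit.
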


We note that there is a connection between the $g$-functional with the notion of semi-inner product. We recall the definition of semi-inner product. 
\begin{definition}\label{dfn:sip}
Let $X$ be a vector space over the field $\mathbb{K}$. The mapping $[\cdot, \cdot]: X\times X \rightarrow \mathbb{K}$ is called a semi-inner product, if for all $x,y,z\in X$ and $\alpha\in \mathbb{K},$ the following properties are satisfied:
\begin{enumerate}[(S1)]
\item $[x+y,z]=[x,z]+[y,z]$;
\item $[\alpha x, y]=\alpha [x,y]$;
\item $[x,x]\geq 0$ and $[x,x]=0$ implies $x=0$;  
\item $|[x,y]|^2 \leq [x,x] [y,y]$;
\item $[x,\alpha y]= \bar{\alpha}[x,y]$.
\end{enumerate}
\end{definition}
\noindent Lumer \cite{Lumer} introduced this concept without (S5) which was later added by Giles \cite{Giles}. 

\begin{remark}
Let $X$  be a vector space equipped with a semi-inner product $[\cdot,\cdot]$. Then,
$$\|x\|:=[x,x]^{\frac12}, \quad (x\in X),$$
is a norm on $X$ (see \cite[Proposition 3]{Dragomir}). We therefore say that on a normed space $(X,\norm{\cdot})$ with a semi-inner product $[\cdot, \cdot]$, that $[\cdot, \cdot]$ generates the norm $\norm{\cdot}$ if  $\|x\|=[x,x]^{\frac12}$, for all $x\in X.$ We note that such a semi-inner product always exists on a normed space $X$ (\cite[Theorem 1]{Giles}). The next proposition provides a condition for uniqueness.
\end{remark}

\begin{proposition}[Dragomir \cite{Dragomir}, Proposition 4, p. 21]\label{prop:unique}
Let $(X,\norm{\cdot})$ be a normed space. Then $X$ is smooth if and only if there exists a unique semi-inner product which generates $\norm{\cdot}$.
\end{proposition}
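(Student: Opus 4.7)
The plan is to establish a two-sided estimate
\[ g^-(x,y) \leq [y,x] \leq g^+(x,y) \]
that holds for \emph{every} semi-inner product $[\cdot,\cdot]$ generating $\norm{\cdot}$, and then read off both directions from it. The key move is to pair $x+ty$ against the fixed vector $x$, not against $x+ty$, so that (S1) and (S2) give the exact identity $[x+ty, x] = \norm{x}^2 + t[y,x]$; one never has to differentiate in the right-hand slot (where the semi-inner product is only Lipschitz, not linear).

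For the forward direction I would let $[\cdot,\cdot]$ be any semi-inner product that generates $\norm{\cdot}$ and combine the identity above with the Cauchy--Schwarz property (S4), $[x+ty, x] \leq \norm{x+ty}\,\norm{x}$, to obtain
\[ t[y,x] \leq \norm{x}\bigl(\norm{x+ty} - \norm{x}\bigr) \]
for every real $t$. Dividing by $t>0$ and sending $t \to 0^+$ yields $[y,x] \leq \norm{x}G^+(x,y) = g^+(x,y)$; dividing by $t<0$ flips the inequality and, as $t\to 0^-$, gives $[y,x] \geq g^-(x,y)$. If $X$ is smooth then $g^+(x,y)=g^-(x,y)$, so $[y,x]$ is pinned to the single value $g(x,y)$, which depends only on the norm. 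Existence of at least one such semi-inner product is Giles' theorem, and the sandwich bound forces uniqueness.

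For the converse I would argue contrapositively. Suppose $X$ is not smooth, so there exist $x_0 \neq 0$ and $y_0 \in X$ with $g^-(x_0,y_0) < g^+(x_0,y_0)$. Hahn--Banach then supplies at least two distinct norm-attaining functionals $\phi_1, \phi_2 \in S_{X^*}$ with $\phi_i(x_0) = \norm{x_0}$. Using Giles' construction, once a supporting functional $\phi_x$ is fixed at every nonzero $x$, the formula $[y,x] := \norm{x}\,\phi_x(y)$ defines a semi-inner product generating $\norm{\cdot}$. Choosing $\phi_1$ at $x_0$ in one construction and $\phi_2$ at $x_0$ in the other (and agreeing elsewhere) produces two semi-inner products $[\cdot,\cdot]_1$ and $[\cdot,\cdot]_2$ that both generate $\norm{\cdot}$ but differ at the pair $(y_0,x_0)$, contradicting uniqueness.

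I expect the main obstacle to be the asymmetry in continuity of $[\cdot,\cdot]$: the map is linear (hence continuous) in the first slot but only Lipschitz, and in general discontinuous, in the second --- indeed, continuity in the second slot is itself equivalent to smoothness. The sandwich argument neutralises this by never letting $t$ migrate into the second argument. A secondary bookkeeping task is to verify that Giles' prescription $[y,x] = \norm{x}\phi_x(y)$ satisfies (S1)--(S5); this is standard, but requires being consistent about selecting a single supporting functional at each point and handling positive and negative scalars in (S2) and (S5) carefully.
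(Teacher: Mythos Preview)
The paper does not actually prove this proposition: it is stated with attribution to Dragomir \cite{Dragomir} and used as a black box (most notably in the proof of Proposition~\ref{prop:unique-g}, where uniqueness is simply invoked). So there is no in-paper argument to compare against.

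That said, your proposal is correct and is essentially the standard proof one finds in the literature. The sandwich estimate $g^-(x,y)\le [y,x]\le g^+(x,y)$, obtained exactly as you describe by expanding $[x+ty,x]$ linearly in the first slot and bounding via (S4), is the canonical way to pin any generating semi-inner product between the one-sided derivatives; smoothness then collapses the sandwich. For the converse, your contrapositive via Giles' selection $[y,x]=\norm{x}\phi_x(y)$ is also the standard route. Two small points worth tightening when you write it out: (a) to guarantee the two constructions differ specifically at $(y_0,x_0)$ you should pick $\phi_1,\phi_2$ as supporting functionals attaining $G^-(x_0,y_0)$ and $G^+(x_0,y_0)$ respectively (mere distinctness of $\phi_1,\phi_2$ does not force $\phi_1(y_0)\neq\phi_2(y_0)$); and (b) the homogeneity axiom (S5) forces the selection to satisfy $\phi_{\lambda x}=\operatorname{sgn}(\lambda)\phi_x$, so ``agreeing elsewhere'' really means agreeing on all rays other than $\mathbb{R}x_0$, and the two choices at $x_0$ must be propagated consistently along that whole line. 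You already flag this bookkeeping; just make sure it is carried through.
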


Let $(X,\norm{\cdot})$ be a real normed space. Recall that the $g$-functional relative to $\norm{\cdot}$ is the map $g\colon X\times X\to \real$ given by 
$$g(x,y)=\frac12(g^+(x,y)+g^-(x,y)), \quad x,y\in X.$$
We are in a position to specify the construction of a (unique) semi-inner product, using the $g$-functional relative to $\norm{\cdot}$, which generates $\norm{\cdot}$.
\begin{proposition}\label{prop:unique-g}
Let $(X,\norm{\cdot})$ be a real normed space. Define 
$[\cdot,\cdot]\colon X\times X \to \mathbb{R}$ by
$$[y,x]:=g(x,y), \quad \text{for all } x,y\in X.$$
Then,
\begin{enumerate}[(i)]
\item $[\cdot, \cdot]$ satisfies properties (S2)-(S5) of Definition \ref{dfn:sip};  
\item If $X$ is smooth, then $[\cdot,\cdot]$ is the unique semi-inner product on $X\times X.$
\end{enumerate}
\end{proposition}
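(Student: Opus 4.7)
The plan is to verify axioms (S2)--(S5) for $[y,x]:=g(x,y)$ using only the identity $g^\pm(x,y)=\norm{x}G^\pm(x,y)$, the elementary behaviour of the one-sided Gateaux derivatives, and Proposition~\ref{prop:useful}, and then to obtain (ii) by recovering the missing axiom (S1) from smoothness and invoking Proposition~\ref{prop:unique}.

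For the definiteness axiom (S3), I would first compute $G^\pm(x,x)=\lim_{t\to 0^\pm}\tfrac{1}{t}(|1+t|-1)\norm{x}=\norm{x}$, which gives $[x,x]=g(x,x)=\norm{x}^2$. Thus $[x,x]\ge 0$ with equality only at $x=0$. The Cauchy--Schwarz axiom (S4) is immediate from the bound $|g(y,x)|\le\norm{x}\norm{y}$ in Proposition~\ref{prop:useful}, since squaring gives $|[x,y]|^2\le\norm{x}^2\norm{y}^2=[x,x][y,y]$.

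The homogeneity axioms (S2) and (S5) amount, respectively, to homogeneity of $g$ in its second and first argument. For $\alpha>0$, the substitution $s=t\alpha$ in the defining difference quotient preserves the sign of $t$, and a direct calculation gives $g^\pm(y,\alpha x)=\alpha g^\pm(y,x)$ and $g^\pm(\alpha y,x)=\alpha g^\pm(y,x)$. For $\alpha<0$ the same substitution reverses the sign of $t$, so one finds instead $g^\pm(y,\alpha x)=\alpha g^\mp(y,x)$ and $g^\pm(\alpha y,x)=\alpha g^\mp(y,x)$; the individual $g^\pm$ therefore fail to be homogeneous. The averaging defining $g$ rescues the argument: since $g^++g^-$ is invariant under the swap $+\leftrightarrow -$, summing the two identities yields $g(y,\alpha x)=\alpha g(y,x)$ and $g(\alpha y,x)=\alpha g(y,x)$, which are (S2) and (S5) after unpacking $[y,x]=g(x,y)$.

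For part (ii), smoothness of $X$ means the norm is Gateaux differentiable at every nonzero $x$, so $G^+(x,\cdot)=G^-(x,\cdot)$ is a bounded linear functional on $X$, and hence so is $g(x,\cdot)=\norm{x}G^+(x,\cdot)$. Linearity in the second argument of $g$ gives $g(x,y+z)=g(x,y)+g(x,z)$, which is precisely axiom (S1) for $[\cdot,\cdot]$. Together with (S2)--(S5) from part (i) and the identity $[x,x]^{1/2}=\norm{x}$ proved above, the map $[\cdot,\cdot]$ is a semi-inner product on $X$ that generates $\norm{\cdot}$; Proposition~\ref{prop:unique} then forces it to be the unique such semi-inner product. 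The main obstacle is the negative-scalar case in the homogeneity axioms, where the asymmetry of the one-sided derivatives requires precisely the symmetrisation built into the definition of $g$.
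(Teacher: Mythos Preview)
Your proof is correct and follows the same overall strategy as the paper: verify (S2)--(S5) directly from the definition of $g$ and Proposition~\ref{prop:useful}, then in the smooth case establish the missing additivity (S1) and conclude uniqueness via Proposition~\ref{prop:unique}. You actually supply more detail than the paper for part (i), in particular the negative-scalar case of (S2)/(S5), which the paper simply declares to follow ``from the definition of $g$''.

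The only point of divergence is in how additivity is obtained in part (ii). You invoke the standard fact that at a point of G\^ateaux differentiability the derivative $G^+(x,\cdot)=G^-(x,\cdot)$ is automatically a bounded linear functional. The paper instead makes this step self-contained: it uses Proposition~\ref{prop:subadditive} to sandwich $g^-(x,y)+g^-(x,z)\le g(x,y+z)\le g^+(x,y)+g^+(x,z)$ and then collapses the inequalities using $g^+\equiv g^-$. Both arguments are valid and ultimately rest on the same convexity fact; the paper's version has the advantage of being fully internal to the results already stated.
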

\begin{proof}
First we note that $g(x,x)=\norm{x}^2$ for all $x\in X$. We omit the proof of {\it (i)}, as the proof for (S2), (S3), and (S5) readily follows from the definition of $g$, and (S4) follows from Proposition \ref{prop:useful}. We prove {\it (ii)}. First, note that since $X$ is assumed to be smooth, then $g^+\equiv g^-$, i.e. $$g(x,y)=\|x\|\lim_{t\rightarrow 0} \frac1t\left(\|x+ty\|-\|x\|\right), \quad \text{for all } x,y\in X.$$
By Proposition \ref{prop:subadditive}, $g^\pm$ is also sub-(super-)additive with respect to the second argument, and thus $$ g^-(x,y)+g^-(x,z)\leq g^-(x,y+z)=g(x,y+z)=g^+(x,y+z)\leq  g^+(x,y)+g^+(x,z)$$ and since $g^+\equiv g^-$, we get equality, and therefore,  $$[y+z,x]=g(x,y+z)=g(x,y)+g(x,z)=[y,x]+[z,x].$$
This shows (S1) of Definition \ref{dfn:sip} and together with {\it (i)}, we conclude that $[\cdot,\cdot]$ is a semi-inner product which generates $\norm{\cdot}$. Uniqueness follows from Proposition \ref{prop:unique}. 
\end{proof}

\begin{example}\label{ex:ell-p}[Mili\v{c}i\'c \cite{Milicic-parallelogram}, p. 72] From Proposition \ref{prop:unique-g}, we note that the smoothness of the normed space implies the linearity of the $g$-functional (in the second argument). Let $x=(x_i), y=(y_i)\in \ell^p$ with $1 < p < \infty$. The functional 
\begin{equation}
[y,x]_{\ell^p}= g_{\ell^p}(x,y) = \left\{
\begin{array}{ll}
\|x\|_{\ell^p}^{2-p}\sum_{i} |x_i|^{p-1}\mathrm{sgn}(x_i)y_i,  & x\neq 0;\\
0, &x=0;
\end{array}\right.
\end{equation}
is the unique semi-inner product on $\ell^p \times \ell^p$.  We note that 
\begin{equation}
g_{\ell^1}(x,y) = \|x\|_{\ell^1}\sum_{i}\mathrm{sgn}(x_i)y_i
\end{equation}
is linear in the second argument, and thus is a semi-inner product on $\ell^1\times \ell^1$, although the space is not smooth. 
\end{example}

\section{Uniform non-squareness} \label{section:unsq}

Let $(X,\norm{\cdot})$ be a normed space. Recall that $X$ is said to be uniformly convex if for all $\varepsilon \in (0,2)$ there exists $\delta \in (0,1)$ such that the following holds:
$$\text{if }x,y\in S_X \text{ with }\norm{x-y}\geq \varepsilon, \text{ then } \norm{\frac{x+y}2}\leq 1-\delta.$$

\noindent The notion of uniform non-squareness is introduced by James \cite{James} as a weaker form of uniform convexity. In particular, James showed that a Banach space is reflexive provided that the unit ball is uniformly non-square and thus it gave a refinement to the implication of reflexivity by uniform convexity, that is, 
\begin{center}
    Uniform convexity \quad $\Rightarrow$ \quad Uniform non squareness \quad  $\Rightarrow$ \quad Reflexivity.
\end{center}
\begin{definition}\label{dfn:unsq}
Let $(X,\norm{\cdot})$ be a normed space. The space $X$ is said to be uniformly non-square if there exists $\delta \in (0,1)$ such that 
$$\text{if }x,y\in S_X \text{ with  }\norm{\frac{x-y}{2}}\geq 1-\delta, \text{ then } \norm{\frac{x+y}2}\leq 1-\delta.$$
\end{definition}

\begin{remark}
\begin{enumerate}
\item Definition \ref{dfn:unsq} is rewritten from its original definition in \cite{James}. 
\item In $\real^2$, if $1<\lambda<\sqrt{2}$, then the norm $\norm{\cdot}_\lambda$ defined by
$$\norm{(x,y)}_\lambda:=\max\big\{(x^2+y^2)^\frac12,\lambda\max\{|x|,|y|\}\big\}, \quad (x,y)\in \real^2,$$
is uniformly non-square but not strictly convex (hence, not uniformly convex). This example is due to Kato and Takahashi \cite[p. 1058]{Kato-Takahashi}.
\end{enumerate}
\end{remark}

Our aim is to show that uniform non-squareness is shared by angularly equivalent norms. We start with two lemmas which provide characterisations of uniform non-squareness using norm angles. We follow the main idea of the proof of Theorem 2.6 of \cite{angular}. 
\begin{lemma}\label{lemma:unsq-delta}
Let $(X,\norm{\cdot})$ be a normed space. Then $X$ is uniformly non-square if and only if there exists $\delta \in (0,1)$ such that the following holds:
$$\text{if }x,y\in S_X \text{ with } \norm{\frac{x-y}2}\geq 1-\delta, \text{ then }  \tan\left(\frac{\theta(x,y)}2\right)\geq \sqrt{\delta}.$$
\end{lemma}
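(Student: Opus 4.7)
My plan is to prove the two directions separately: the forward direction is a short chain of inequalities from Proposition \ref{prop:useful} and the half-angle formula, while the reverse direction is harder and requires a constructive (perturbation) argument rather than a direct algebraic one.

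For the forward direction I would take the \emph{same} $\delta$ as in the definition of uniform non-squareness. If $x, y \in S_X$ with $\norm{(x-y)/2} \geq 1-\delta$, then by uniform non-squareness $\norm{x+y} \leq 2(1-\delta)$, and Proposition \ref{prop:useful} applied to $x \in S_X$ gives
\[
g(x,y) \leq g^+(x,y) \leq \norm{x+y} - 1 \leq 1 - 2\delta.
\]
Since $\cos\theta(x,y) = g(x,y)$, the half-angle identity $\tan^2(\theta/2) = (1-\cos\theta)/(1+\cos\theta)$ yields
\[
\tan^2\!\left(\frac{\theta(x,y)}{2}\right) \geq \frac{2\delta}{2 - 2\delta} = \frac{\delta}{1-\delta} \geq \delta,
\]
so $\tan(\theta(x,y)/2) \geq \sqrt{\delta}$. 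This is exactly the pattern used in the proof of Theorem 2.6 of \cite{angular}.

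For the reverse direction I would argue the contrapositive: if $X$ is not uniformly non-square, then the tangent inequality fails at every $\delta \in (0,1)$. The key difficulty is that the only link between $g$ and $\norm{x+y}$ in Proposition \ref{prop:useful} is the one-sided estimate $g \leq \norm{x+y} - 1$, which gives a \emph{lower} bound on $\norm{x+y}$ in terms of $g$, not the upper bound we need. So the bound on $g$ produced by the tangent hypothesis cannot be inverted on the original pair, and one is forced to produce a \emph{new} pair with $g$ close to $1$ instead. Given the failure of uniform non-squareness, for arbitrarily small $\eta > 0$ there exist $x, y \in S_X$ with $\norm{x-y}/2 > 1-\eta$ and $\norm{x+y}/2 > 1-\eta$; I would form the perturbed pair
\[
x_\alpha = \frac{x + \alpha y}{\norm{x + \alpha y}}, \qquad y_\alpha = \frac{y + \alpha x}{\norm{y + \alpha x}}
\]
for a small parameter $\alpha > 0$. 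Selecting a support functional $\phi \in S_{X^*}$ at $x+y$, one has $\phi(x), \phi(y) \geq \norm{x+y} - 1 > 1 - 2\eta$, and a direct computation gives $\phi(x_\alpha), \phi(y_\alpha) \geq 1 - 2\eta$, so both $\norm{x_\alpha + y_\alpha}$ and $\norm{x_\alpha - y_\alpha}$ stay close to $2$. The central step is then to show $g(x_\alpha, y_\alpha) \to 1$ as $\eta, \alpha \to 0$: intuitively, the perturbation pushes $x_\alpha$ off any non-smooth vertex of $S_X$ into the (relative) interior of the face supported by $\phi$, making $\phi$ act as \emph{the} support functional of $x_\alpha$ and forcing $g(x_\alpha, y_\alpha) \approx \phi(y_\alpha) \to 1$. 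This produces, at every target $\delta$, a pair violating $\tan(\theta/2) \geq \sqrt{\delta}$.

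The main obstacle, then, is the reverse direction: specifically, making ``$g(x_\alpha, y_\alpha) \to 1$'' rigorous in a general normed space, as opposed to concrete cases like $\ell^1$ or $\ell^\infty$ where the perturbed pair lies on an explicit flat face and the identity $g(x_\alpha, y_\alpha) = 1$ is immediate. In full generality this likely needs a Bishop--Phelps--Bollob\'as-type selection to replace $\phi$ with a genuine support functional of a point near $x_\alpha$, bridging the gap between ``almost'' and ``exact'' support.
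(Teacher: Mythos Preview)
Your forward direction is correct and coincides with the paper's argument.

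Your reverse direction, however, takes a different route from the paper and carries the gap you yourself flag. The paper does \emph{not} argue by contrapositive or perturbation; it proves uniform non-squareness directly. Starting from the tangent hypothesis with constant $\eta$, it picks $\delta=\min\{\eta/2,\ \eta/(1+\eta)\}$, takes $x,y\in S_X$ with $\norm{(x-y)/2}\ge 1-\delta$, and writes
\[
x-y=(2-\norm{x+y})\,x-\norm{x+y}\left(\frac{x+y}{\norm{x+y}}-x\right).
\]
The triangle inequality then forces either $2-\norm{x+y}\ge 2\delta$ (done), or $\norm{\frac{x+y}{\norm{x+y}}-x}$ large, in which case the tangent hypothesis is applied to the pair $u=\frac{x+y}{\norm{x+y}}$, $v=x$ (both in $S_X$) to bound $g(u,v)$ from above. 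The key observation is that Proposition~\ref{prop:useful} with first argument $x+y$ and second argument $x$ gives
\[
\norm{x+y}-1 \le \frac{g(x+y,x)}{\norm{x+y}}=g\!\left(\frac{x+y}{\norm{x+y}},x\right),
\]
so an upper bound on $g(u,v)$ \emph{is} an upper bound on $\norm{x+y}$. This is precisely the missing link you were searching for: you noted that Proposition~\ref{prop:useful} only gives $g(x,y)\le\norm{x+y}-1$, a lower bound on $\norm{x+y}$; but swapping the first argument of $g$ from $x$ to $\frac{x+y}{\norm{x+y}}$ reverses the useful inequality (via $g^-(w,v)\ge\norm{w}(\norm{w}-\norm{w-v})$ with $w=x+y$).

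Your perturbation scheme aims instead to manufacture a pair with $g$ close to $1$, and ``$g(x_\alpha,y_\alpha)\to 1$'' is exactly where it stalls. The heuristic that the perturbation pushes $x_\alpha$ onto a flat face is valid in polyhedral examples like $\ell^\infty$, but in a general normed space there is no flat face to land on, and a Bishop--Phelps--Bollob\'as step would perturb the functional rather than control $g^\pm$ at a fixed point. The paper's device of evaluating $g$ at $\bigl(\tfrac{x+y}{\norm{x+y}},x\bigr)$ bypasses this difficulty entirely and keeps the whole argument elementary.
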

\begin{proof}
Assume that $X$ is uniformly non-square, i.e. there exists $\eta\in (0,1)$ such that
$$\text{if }x,y\in S_X \text{ with } \norm{\frac{x-y}2}\geq 1-\eta, \text{ then } \norm{\frac{x+y}2}\leq 1-\eta.$$
Set $\delta:=\eta.$ Let $x,y\in S_X$ with $\norm{\frac{x-y}2}\geq 1-\delta=1-\eta.$ Since $x,y\in S_X$, we have the following inequality
$$-1\leq 1-\norm{x-y}\leq g(x,y)\leq \norm{x+y} -1\leq 1, $$
from Proposition \ref{prop:useful}. Therefore, we have  $1+g(x,y)\leq 2$ and $1-g(x,y)\geq 2-\norm{x+y}$. Then, 
\begin{align*}
    \tan\left(\frac{\theta(x,y)}2\right) &\geq \sqrt{\frac{1-g(x,y)}{1+g(x,y)}}\\
    &\geq \sqrt{\frac{1-g(x,y)}2}\geq \sqrt{1-\norm{\frac{x+y}2}}\geq\sqrt{\eta}=\sqrt{\delta}.
\end{align*}
Conversely, assume there exists $\eta\in (0,1)$ such that
$$\text{if }x,y\in S_X \text{ with } \norm{\frac{x-y}2}\geq 1-\eta, \text{ then }  \tan\left(\frac{\theta(x,y)}2\right)\geq \sqrt{\eta}.$$
Choose $\delta:=\min\{\frac{\eta}2, \frac{\eta}{1+\eta}\}>0$. Let $x,y\in S_X$ with $\norm{\frac{x-y}2}\geq 1-\delta\geq 1-\eta$, since $\delta\leq \frac{\eta}2< \eta.$ 
If $\norm{x+y}=0$, then $\norm{\frac{x+y}2}=0\leq 1-\delta$. 
We consider the case $\norm{x+y}\neq 0$. Now,
\begin{align*}
    &\norm{(2-\norm{x+y})x-\norm{x+y}\left(\frac{x+y}{\norm{x+y}}-x\right)}\\
    &=\norm{2x-\norm{x+y}x+\norm{x+y}x-x+y}=\norm{x-y}\geq 2(1-\delta). 
\end{align*}
Thus, either 
$$\norm{(2-\norm{x+y})x}\geq 2\delta$$
or 
$$\norm{\norm{x+y}\left(\frac{x+y}{\norm{x+y}}-x\right)}\geq 2(1-\delta)-2\delta=2-4\delta,$$
which follows from the triangle inequality.
In the first case, we have
$$2-\norm{x+y}=\norm{(2-\norm{x+y})x}\geq 2\delta$$
that is
$$\norm{\frac{x+y}2}\leq 1-\delta, $$
and we are done. In the second case, we have
$$\norm{\frac{x+y}{\norm{x+y}}-x}\geq \frac{2-4\delta}{\norm{x+y}}\geq 1-2\delta\geq 1-\eta$$
by our choice of $\delta\leq \frac\eta2.$ Therefore, by our assumption, 
$$\sqrt{\eta}\leq \tan\left(\frac{\theta(x,y)}2\right)=\sqrt{\frac{1-g(\frac{x+y}{\norm{x+y}},x)}{1+g(\frac{x+y}{\norm{x+y}},x)}},$$
and by rearranging we obtain
$$g\left(\frac{x+y}{\norm{x+y}},x\right)\leq\frac{1-\eta}{1+\eta}.$$
By Proposition \ref{prop:useful} with $x+y$ and $x$, we have 
$$\norm{x+y}-1\leq \frac{g(x+y,x)}{\norm{x+y}}$$ 
and thus
\begin{align*}
    \norm{\frac{x+y}2}\leq \frac12\left(1+\frac{g(x+y,x)}{\norm{x+y}}\right)&\leq \frac12\left(1+\frac{1-\eta}{1+\eta}\right)\\
    &=\frac{1}{1+\eta}=1-\frac{\eta}{1+\eta}\leq 1-\delta
\end{align*}
as we choose $\delta\leq \frac{\eta}{1+\eta}.$
This completes the proof. 
\end{proof}

\begin{lemma}\label{lemma:unsq-equivalence}
Let $(X,\norm{\cdot})$ be a normed space. Then the following are equivalent.
\begin{enumerate}[(i)]
    \item $X$ is uniformly nonsquare.
    \item there exists $\delta \in (0,1)$ such that the following holds:
    $$\text{if }x,y\in S_X \text{ with } \norm{\frac{x-y}2}\geq 1-\delta, \text{ then }  \tan\left(\frac{\theta(x,y)}2\right)\geq \sqrt{\delta}.$$
    \item there exists $\varepsilon\in (0,2)$ and $\delta\in (0,1)$ such that the following holds:
    $$\text{if }x,y\in S_X \text{ with }\norm{x-y}\geq \varepsilon, \text{ then }  \tan\left(\frac{\theta(x,y)}2\right)\geq \delta.$$
\end{enumerate}
\end{lemma}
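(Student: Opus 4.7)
The plan is to lean on Lemma \ref{lemma:unsq-delta}, which already gives the equivalence $(i)\Leftrightarrow(ii)$. Thus the remaining task is just $(ii)\Leftrightarrow(iii)$, and conditions $(ii)$ and $(iii)$ are essentially the same statement with different parametrisations, so the work is a matter of choosing constants appropriately.

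For $(ii)\Rightarrow(iii)$, I would take the $\delta\in(0,1)$ furnished by $(ii)$ and set $\varepsilon:=2(1-\delta)\in(0,2)$ and $\delta':=\sqrt{\delta}\in(0,1)$. Then $\|x-y\|\geq\varepsilon$ is just a rewriting of $\|(x-y)/2\|\geq 1-\delta$, so $(ii)$ yields $\tan(\theta(x,y)/2)\geq\sqrt{\delta}=\delta'$, which is $(iii)$.

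For $(iii)\Rightarrow(ii)$, the parameters $\varepsilon$ and $\delta$ supplied by $(iii)$ are a priori unrelated, so I would combine them into a single threshold by setting
$$\delta'':=\min\bigl\{\delta^2,\,1-\varepsilon/2\bigr\}\in(0,1).$$
If $x,y\in S_X$ satisfy $\|(x-y)/2\|\geq 1-\delta''$, then $\|x-y\|\geq 2(1-\delta'')\geq\varepsilon$ by the second term in the minimum, so $(iii)$ gives $\tan(\theta(x,y)/2)\geq\delta\geq\sqrt{\delta^2}\geq\sqrt{\delta''}$ by the first term. This is precisely condition $(ii)$ with constant $\delta''$.

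I do not expect any serious obstacle: Lemma \ref{lemma:unsq-delta} carries all the nontrivial geometry (the triangle-inequality splitting and the Proposition \ref{prop:useful} estimates), and the step $(ii)\Leftrightarrow(iii)$ is purely bookkeeping between the ``boundary'' threshold $1-\delta$ and a generic threshold $\varepsilon\in(0,2)$.
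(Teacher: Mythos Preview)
Your proposal is correct and essentially identical to the paper's own proof: the equivalence $(i)\Leftrightarrow(ii)$ is deferred to Lemma~\ref{lemma:unsq-delta}, and the parameter choices you make for $(ii)\Rightarrow(iii)$ and $(iii)\Rightarrow(ii)$ (namely $\varepsilon=2(1-\delta)$, $\delta'=\sqrt{\delta}$, and $\delta''=\min\{\delta^2,\,1-\varepsilon/2\}$) are exactly those in the paper. No gaps and no meaningful differences.
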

\begin{proof}
The equivalence of {\it (i)} and {\it (ii)} follows from Lemma \ref{lemma:unsq-delta}. We show that {\it (ii)} and {\it (iii)} are equivalent. Assume that there exists $\eta  \in (0,1)$ such that the following holds:
$$\text{if }x,y\in S_X \text{ with } \norm{\frac{x-y}2}\geq 1-\eta, \text{ then }  \tan\left(\frac{\theta(x,y)}2\right)\geq \sqrt{\eta}.$$
Set $\varepsilon:=2(1-\eta)>0$ and $\delta:=\sqrt{n}>0$. Let $x,y\in S_X$ be such that $\norm{x-y}\geq \varepsilon.$ Thus, $\norm{x-y}\geq 2(1-\eta)$, that is $\norm{\frac{x-y}2}\geq 1-\eta.$ By assumption, $$\tan\left(\frac{\theta(x,y)}2\right)\geq \sqrt{\eta}=\delta.$$
Now we assume that there exists $\varepsilon \in (0,2)$ and $\eta \in (0,1)$ such that the following holds:
    $$\text{if }x,y\in S_X \text{ with }\norm{x-y}\geq \varepsilon, \text{ then }  \tan\left(\frac{\theta(x,y)}2\right)\geq \eta.$$
Set $\delta:=\min\{1-\frac{\varepsilon}2,\eta^2\}>0$. Let $x,y\in S_X$ be such that $\norm{\frac{x-y}2}\geq 1-\delta.$ Thus, by our choice of $\delta\leq 1-\frac{\varepsilon}2$, we have
$$\norm{\frac{x-y}2}\geq 1-\delta\geq \frac{\varepsilon}2, \quad \text{and so} \quad \norm{x-y}\geq\varepsilon.$$
By assumption, we have $\tan\left(\frac{\theta(x,y)}2\right)\geq \eta\geq\sqrt{\delta},$
by our choice of $\delta\leq \eta^2.$
\end{proof}

Now we prove our main result of the section. 
\begin{theorem}
Let $X$ be a real normed space with two angularly equivalent norms $\norm{\cdot}_1$ and $\norm{\cdot}_2$. Then $X$ is uniformly non-square with respect to $\norm{\cdot}_1$ if and only if $X$ is uniformly non-square with respect to $\norm{\cdot}_2$.
\end{theorem}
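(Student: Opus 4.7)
The plan is to use the angle-based characterisation of uniform non-squareness from Lemma \ref{lemma:unsq-equivalence}(iii) and then transfer the $\tan(\theta/2)$ condition through angular equivalence. Since angular equivalence is symmetric, it suffices to prove one implication; I assume $X$ is uniformly non-square with respect to $\norm{\cdot}_1$ and deduce the same for $\norm{\cdot}_2$.

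From the hypothesis together with Lemma \ref{lemma:unsq-equivalence}(iii), I extract $\varepsilon_1\in(0,2)$ and $\delta_1\in(0,1)$ such that any $x,y$ on the $\norm{\cdot}_1$-unit sphere with $\norm{x-y}_1\geq\varepsilon_1$ satisfy $\tan(\theta_1(x,y)/2)\geq\delta_1$. I then fix a constant $C>0$ from the symmetric angular equivalence so that $\tan(\theta_1(x,y)/2)\leq C\tan(\theta_2(x,y)/2)$ for all non-zero $x,y$, and set $\delta_2:=\delta_1/C$. Crucially, the norm angle depends only on the directions of its arguments: the positive homogeneity $g^\pm(\alpha x,\beta y)=\alpha\beta\,g^\pm(x,y)$ for $\alpha,\beta>0$ is immediate from the defining limit, so $\theta_i(\alpha x,\beta y)=\theta_i(x,y)$ for $i=1,2$.

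Let $u,v$ lie on the $\norm{\cdot}_2$-unit sphere with $\norm{u-v}_2\geq\varepsilon_2$ for some $\varepsilon_2\in(0,2)$ to be calibrated, and set $\tilde u=u/\norm{u}_1$, $\tilde v=v/\norm{v}_1$, which lie on the $\norm{\cdot}_1$-unit sphere with $\theta_i(\tilde u,\tilde v)=\theta_i(u,v)$. If $\varepsilon_2$ can be chosen so that $\norm{\tilde u-\tilde v}_1\geq\varepsilon_1$ whenever $\norm{u-v}_2\geq\varepsilon_2$, then the hypothesis yields $\tan(\theta_1(u,v)/2)\geq\delta_1$, angular equivalence promotes this to $\tan(\theta_2(u,v)/2)\geq\delta_2$, and Lemma \ref{lemma:unsq-equivalence}(iii) (applied in the reverse direction) closes the argument.

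The main obstacle is producing the calibrated $\varepsilon_2$. Using the topological-equivalence constants $A,B$ from (AE1) with $A\norm{\cdot}_1\leq\norm{\cdot}_2\leq B\norm{\cdot}_1$, together with the decomposition $\tilde u-\tilde v=(u-v)/\norm{u}_1+v(1/\norm{u}_1-1/\norm{v}_1)$, the triangle inequality gives a lower bound of the shape $(A/B)\norm{u-v}_2-(B-A)/B$, whose error term arises from $|\norm{u}_1-\norm{v}_1|$ and is not automatically negligible when $B/A$ is large. The resolution I anticipate is to work from Lemma \ref{lemma:unsq-delta} together with a refined analysis showing that pairs $u,v$ nearly antipodal in $\norm{\cdot}_2$ force $\norm{u}_1$ and $\norm{v}_1$ to be close (so the error term is absorbed), and then choose $\varepsilon_2$ (equivalently the $\delta_2$ of Lemma \ref{lemma:unsq-delta}) small enough in terms of $\varepsilon_1,\delta_1,C,A,B$ to complete the proof.
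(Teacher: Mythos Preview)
Your overall strategy matches the paper's exactly: invoke the angle characterisation of Lemma~\ref{lemma:unsq-equivalence}(iii), renormalise $u,v$ to the $\norm{\cdot}_1$-sphere (using scale-invariance of the norm angle), verify the renormalised vectors are still $\varepsilon_1$-separated in $\norm{\cdot}_1$, apply the hypothesis, and then transfer the $\tan(\theta/2)$ bound via angular equivalence. You have also correctly isolated the only nontrivial step.

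The gap is your proposed resolution of that step. The assertion that ``$\norm{u-v}_2$ close to $2$ forces $\norm{u}_1$ and $\norm{v}_1$ to be close'' is unjustified and in general false: $\norm{u-v}_2$ near $2$ does \emph{not} force $v$ near $-u$ (for instance $u=(1,1)$, $v=(-1,1)$ in $(\real^2,\norm{\cdot}_\infty)$ have $\norm{u-v}_\infty=2$ yet $u+v=(0,2)\neq 0$), and without $v\approx -u$ there is no mechanism making $\norm{u}_1\approx\norm{v}_1$ beyond the crude bound $\abs{\norm{u}_1-\norm{v}_1}\leq 1/A-1/B$ coming from topological equivalence alone. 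Your additive error term therefore cannot be absorbed merely by pushing $\varepsilon_2$ toward $2$, and nothing in the proposal invokes angular equivalence at this point to rescue it. As written, the argument does not close.

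The paper bypasses the difficulty with the Dunkl--Williams inequality
\[
\norm{\frac{a}{\norm{a}}-\frac{b}{\norm{b}}}\leq\frac{4\norm{a-b}}{\norm{a}+\norm{b}},
\]
applied in $\norm{\cdot}_2$ with $a=\tilde u$ and $b=\tilde v$. Since $\tilde u/\norm{\tilde u}_2=u$ and $\tilde v/\norm{\tilde v}_2=v$, this yields
\[
\norm{u-v}_2\leq\frac{4\norm{\tilde u-\tilde v}_2}{\norm{\tilde u}_2+\norm{\tilde v}_2}\leq\frac{4B\,\norm{\tilde u-\tilde v}_1}{2A}=\frac{2B}{A}\,\norm{\tilde u-\tilde v}_1,
\]
a purely multiplicative bound with no additive error. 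Taking $\varepsilon_2:=2B\varepsilon_1/A$ then gives $\norm{\tilde u-\tilde v}_1\geq\varepsilon_1$, and the rest of your argument goes through verbatim with $\delta_2=\delta_1/C$. This single inequality is the missing ingredient.
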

\begin{proof}
We need to only prove one side of the implication, as the other side follows by reversing the roles of $\norm{\cdot}_1$ and $\norm{\cdot}_2$. Let $C>1$ be such that 
$$\tan\left(\frac{\theta_1(x,y)}2\right)\leq C \tan\left(\frac{\theta_2(x,y)}2\right)$$
for all $x,y\in X$, where $\theta_i(x,y)$ is the norm angle from $x$ to $y$ with respect to $\norm{\cdot}_1$. Since angular equivalence implies norm equivalence, let $M,m>0$ be such that 
$$m\norm{x}_1\leq \norm{x}_2\leq M\norm{x}_1, $$
for all $x\in X.$
Let $X$ be uniformly non-square with respect to $\norm{\cdot}_1$. By Lemma \ref{lemma:unsq-equivalence} part {\it (iii)} there exist $\nu,\eta>0$ such that 
$$\text{if }x,y\in S_{(X, \norm{\cdot}_1)} \text{ with }\norm{x-y}_1\geq \nu, \text{ then } \tan\left(\frac{\theta_1(x,y)}{2}\right)\geq \eta.$$
Set $\varepsilon:=2\frac{M\nu}{m}>0$ and $\delta:=\frac{\eta}{C}>0$. Let $x,y\in S_{(X, \norm{\cdot}_2)}$ with
$\norm{x-y}_2\geq \varepsilon. $
Let $\hat{x}=\frac{x}{\norm{x}_1}$ and $\hat{y}=\frac{y}{\norm{y}_ 1}$. Note that $\norm{\hat{x}}_1=1=\norm{\hat{y}}_1$. Also, since $x\in S_{(X, \norm{\cdot}_2)}$, we have $\norm{\hat{x}}_2=\frac{\norm{x}_2}{\norm{x}_1}=\frac{1}{\norm{x}_1}$, and thus 
$$x=\norm{x}_1\hat{x}=\frac{\hat{x}}{\norm{\hat x}_2}, \quad
\text{and similarly,} \quad
y=\frac{\hat y}{\norm{\hat y}_2}.$$
Using Dunkl-Williams inequality, we get 
\begin{align*}
    \varepsilon\leq \norm{x-y}_2
    &\leq \norm{\frac{\hat x}{\norm{\hat x}_2}-\frac{\hat y}{\norm{\hat y}_2}}_2\\
    &\leq \frac{4\norm{\hat x-\hat y}_2}{\norm{\hat x}_2+\norm{\hat y}_2} \leq \frac{4M\norm{\hat x-\hat y}_1}{m\norm{\hat x}_1+m\norm{\hat y}_1}=\frac{2M}{m}\norm{\hat x-\hat y}_1
\end{align*}
Thus, 
$$\norm{\hat x-\hat y}_1\geq \frac{m\varepsilon}{2M}=\nu.$$
Therefore, 
$$\eta \leq \tan\left(\frac{\theta_1(\hat x,\hat y)}{2}\right)=\tan\left(\frac{\theta_1( x, y)}{2}\right)\leq C \tan\left(\frac{\theta_2( x, y)}{2}\right), $$
that is, 
$$ \tan\left(\frac{\theta_2( x, y)}{2}\right)\geq \frac{\eta}{C}=\delta, $$
and this completes the proof. 
\end{proof}

\section{Exposed points}\label{section:exposed}

Our aim in this section is to prove a similar result to that of Proposition 2 part (AE3), by considering exposed points instead of extreme points. First we recall the following definitions. 
\begin{definition}
Let $(X,\norm{\cdot})$ be a real normed space and $A$ be a subset of $X$. A nonzero $f\in X^*$ is a support functional for $A$ if there is an $x_0\in A$ such that $f(x_0)=\sup\{f(x):x\in A\}$, in which case $x_0$ is a support point of $A$, the set $\{x:x\in X,\ f(x)=f(x_0)\}$ is a support hyperplane for $A$ and the functional $f$ and the support hyperplane are both said to support $A$ at $x_0$.
\end{definition}

\begin{remark}
Note that as a consequence of the Hahn-Banach theorem, for any $x\in X$ there exists $f\in S_{X^*}$ such that $f(x)=\norm{x}$. Also, $f\in S_{X^*}$ supports $B_X$ at $x_0\in S_X$ if and only if $f(x_0)=1.$
\end{remark}

\begin{definition}
Let $(X,\norm{\cdot})$ be a real normed space and $C$ be a nonempty closed convex subset of $X$. A point $x\in C$ is said to be an exposed point of $C$ if there is $f\in X^*$ such that $f$ is bounded from above on $C$ and attains its supremum on $C$ at $x$ and only at $x$. In this case we call $f$ an exposing functional of $C$ and exposing $C$ at $x$.
\end{definition}

\begin{remark}
If $x_0$ is an exposed point of a nonempty closed convex subset $C$ of $X$, then it is also an extreme point. The converse is not true. For instance, the point $A$ in Figure 1 is an extreme point that is not an exposed point of the bounded region. 

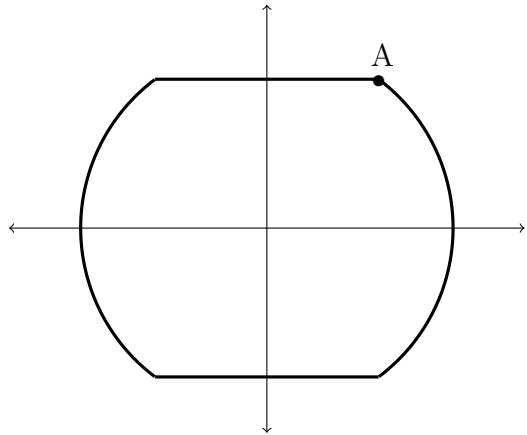
\begin{figure}[h!]
\centering

\begin{tikzpicture}
\begin{axis}[axis equal,
	xmin=-1.1,xmax=1.1,
	ymin=-1.1,ymax=1.2,
	ticks = none,
	xlabel={$\phantom{x}$}, ylabel={$\phantom{y}$},
	]
    \addplot [domain=-0.6:0.6,samples=100,very thick]({x},{-0.8});
	\addplot [domain=-0.6:0.6,samples=100,very thick]({x},{0.8});
	\addplot [domain=-53:53,samples=100,very thick]({cos(x)},{sin(x)});
	\addplot [domain=127:233,samples=100,very thick]({cos(x)},{sin(x)});
	\node at (170,189) {\small{$\bullet$}};
	\node at (172,203) {A};
\end{axis}
\end{tikzpicture}
\caption{An extreme point that is not an exposed point}
\end{figure}
\end{remark}

We recall the following result and refer the readers to Lemma 5.4.16 from Megginson \cite[p. 486]{Megginson} for its proof. We reformulate this for any real normed space.
\begin{proposition}\label{prop:gateaux-ineq}
Let $X$ be a real normed space, $x_0\in S_X$ and $f\in S_{X^*}$. Then $f$ supports $B_X$ at $x_0$ if and only if 
\[
\lim_{t\to0^-}\frac{\norm{x_0+ty}-\norm{x_0}}{t}=G_{-}(x_0,y)\leq f(y)\leq G_{+}(x_0,y)= \lim_{t\to0^+}\frac{\norm{x_0+ty}-\norm{x_0}}{t}
\]
for all $y\in X.$
\end{proposition}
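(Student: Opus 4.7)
The plan is to prove this by reducing the supporting condition to the simpler statement $f(x_0)=1$ and then using the convexity of $t\mapsto \|x_0+ty\|$ to pass between the affine functional and the norm.

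First I would make the preliminary observation that, under the hypotheses $x_0\in S_X$ and $f\in S_{X^*}$, the condition that $f$ supports $B_X$ at $x_0$ is equivalent to $f(x_0)=1$. Indeed, $\sup\{f(x):x\in B_X\}=\|f\|=1$, so $f$ attains its supremum on $B_X$ at $x_0$ iff $f(x_0)=1$ (as noted in the remark preceding the proposition).

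For the forward direction, assume $f(x_0)=1$. For any $y\in X$ and $t\neq 0$, the inequality $|f(x_0+ty)|\leq \|f\|\,\|x_0+ty\|=\|x_0+ty\|$ combined with $f(x_0+ty)=1+tf(y)$ yields
\[
1+tf(y)\leq \|x_0+ty\|.
\]
Subtracting $1=\|x_0\|$ and dividing by $t$ (careful with the sign: the inequality direction is preserved for $t>0$ and reversed for $t<0$), then letting $t\to 0^+$ and $t\to 0^-$, respectively, gives the two bounds
\[
f(y)\leq G^{+}(x_0,y) \quad\text{and}\quad G^{-}(x_0,y)\leq f(y).
\]
Existence of the one-sided limits is guaranteed by the cited Lemma 5.4.14 of Megginson, which relies on the convexity of $t\mapsto\|x_0+ty\|$ and the resulting monotonicity of the difference quotients.

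For the converse, assume the chain of inequalities holds for every $y\in X$. The key trick is to specialize to $y=x_0$: a direct computation from the definition shows
\[
G^{+}(x_0,x_0)=G^{-}(x_0,x_0)=\|x_0\|=1,
\]
so the sandwich forces $f(x_0)=1$, which by the preliminary observation means $f$ supports $B_X$ at $x_0$. I do not anticipate any genuine obstacle here; the only subtlety worth flagging is the sign-handling when dividing by $t<0$ in the forward step, and the mild point that one must invoke the existence of $G^{\pm}$ (and the monotonicity of the difference quotients) before interpreting the limits as the tightest bounds on $f(y)$.
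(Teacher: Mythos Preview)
Your argument is correct. Note, however, that the paper does not give its own proof of this proposition: it simply cites Lemma~5.4.16 of Megginson and restates the result. Your proof is the standard one and is essentially the argument found in Megginson, so there is nothing substantively different to compare.
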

 
We provide a characterisation of an exposed point of the unit ball using the $g$-functional.
\begin{lemma}\label{lemma:exposed-g}
Let $(X,\norm{\cdot})$ be a real normed space. 
Then $x_0\in S_X$ is an exposed point of $B_X$ if and only if $\{y\in S_X: g(x_0,y)=1\}=\{x_0\}$. 
\end{lemma}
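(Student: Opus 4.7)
My plan is to reformulate the condition $g(x_0,y)=1$ (for $y\in S_X$) in terms of the set $D(x_0):=\{f\in S_{X^*}:f(x_0)=1\}$ of normalised support functionals at $x_0$ (nonempty by Hahn-Banach), and then deduce both directions. Proposition \ref{prop:useful} gives $g^\pm(x_0,y)\le\norm{x_0}\norm{y}=1$ for $y\in S_X$, while Proposition \ref{prop:gateaux-ineq} combined with the sublinearity of $G^+(x_0,\cdot)$ (from Proposition \ref{prop:subadditive}) and a Hahn-Banach extension yields the classical identities
\[
G^+(x_0,y)=\max\{f(y):f\in D(x_0)\},\qquad G^-(x_0,y)=\min\{f(y):f\in D(x_0)\};
\]
the same formulas hold for $g^\pm$ since $\norm{x_0}=1$. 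Hence $g(x_0,y)=\tfrac12(g^++g^-)=1$ is equivalent to $g^+(x_0,y)=g^-(x_0,y)=1$, that is, $f(y)=1$ for every $f\in D(x_0)$. The forward direction is then immediate: if $x_0$ is exposed by $f_0\in S_{X^*}$ then $f_0\in D(x_0)$ with $\{z\in S_X:f_0(z)=1\}=\{x_0\}$, so any $y\in S_X$ satisfying $g(x_0,y)=1$ must in particular satisfy $f_0(y)=1$, forcing $y=x_0$.

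For the reverse direction I argue the contrapositive. Assuming $x_0$ is not exposed, I exhibit $y\in S_X\setminus\{x_0\}$ lying in $\bigcap_{f\in D(x_0)}F_f$, where $F_f:=\{z\in S_X:f(z)=1\}$. The device is to pick $f^*$ in the relative interior of the convex set $D(x_0)$. For each $f\in D(x_0)$ one can then write $f^*=\alpha f+\beta f''$ as a strict convex combination with $f''\in D(x_0)$ and $\alpha,\beta>0$; for any $y$ with $f^*(y)=1$, the identity $1=\alpha f(y)+\beta f''(y)$ together with $f(y),f''(y)\le 1$ forces $f(y)=1$. Hence $F_{f^*}=\bigcap_{f\in D(x_0)}F_f$, and since $f^*$ itself does not expose $x_0$, the set $F_{f^*}\setminus\{x_0\}$ is nonempty; any of its elements is the required $y$.

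The main obstacle is the existence of an $f^*\in\mathrm{relint}(D(x_0))$ with the averaging property above. This is automatic in finite dimensions (relative interiors of nonempty convex sets are nonempty) and is easily arranged in separable infinite-dimensional settings via a weighted-sum construction $f^*=\sum_n 2^{-n}f_n$ over a countable family in $D(x_0)$ chosen to be rich enough to witness every point of $S_X\setminus\{x_0\}$; the resulting $f^*\in D(x_0)$ still has the convex-combination property needed for $F_{f^*}=\bigcap_f F_f$.
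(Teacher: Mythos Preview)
Your forward direction is fine and is essentially the paper's argument dressed in slightly different language: both rely on Propositions~\ref{prop:useful} and~\ref{prop:gateaux-ineq}, and your observation that for $y\in S_X$ the equality $g(x_0,y)=1$ forces $g^+(x_0,y)=g^-(x_0,y)=1$, hence $f(y)=1$ for every $f\in D(x_0)$, is exactly the mechanism behind the paper's contradiction.

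The reverse direction, however, has a genuine gap. Your whole strategy rests on producing some $f^*\in D(x_0)$ with $F_{f^*}=\bigcap_{f\in D(x_0)}F_f$, and you justify this only in finite dimensions (nonempty relative interior) and, sketchily, in separable spaces via a weighted sum $\sum 2^{-n}f_n$. The lemma is stated for an \emph{arbitrary} real normed space. In the non-separable case neither device is available: the norm-relative interior of the weak-$*$ compact convex set $D(x_0)$ can be empty, and there is no reason a countable subfamily $\{f_n\}$ should satisfy $\bigcap_n F_{f_n}=\bigcap_{f\in D(x_0)}F_f$. Even your separable sketch is not a proof as written; one would need to invoke weak-$*$ metrizability of $D(x_0)$, pick a countable weak-$*$ dense family, and argue that $f_n(z)=1$ for all $n$ passes to the weak-$*$ closure. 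None of this is carried out, and the general case is simply missing.

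The paper avoids this obstacle by not trying to control all of $D(x_0)$ at once. It fixes a \emph{single} $f\in S_{X^*}$ with $f(x_0)=1$, takes any $y_0\in S_X\setminus\{x_0\}$ with $f(y_0)=1$ (such $y_0$ exists precisely because $x_0$ is assumed not exposed), observes that the whole segment $\{(1-s)x_0+sy_0:0\le s\le 1\}$ then lies in $S_X$, and evaluates $g^{\pm}(x_0,y_0)$ directly from the defining limit via the substitution $t=s/(1-s)$. This is a pointwise computation using nothing beyond the norm on the line through $x_0$ and $y_0$, and it requires no structural hypothesis on $X$. You should replace the relative-interior machinery by this direct route.
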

\begin{proof}
Let $x_0\in S_X$ be an exposed point of $B_X$ with exposing functional $f$. Thus, $f(x_0)=1$ and  $f(x_0)>f(y)$ for all $y\in S_X$. By Proposition \ref{prop:gateaux-ineq}, we have the following inequality
$$g^-(x_0,y)=G_{-}(x_0,y)\leq f(y)\leq G_+(x_0,y)=g^+(x_0,y),$$
for all $y\in X$. Suppose that there exists  $y_0\in S_X$ with $y_0\neq x_0$ such that $g(x_0,y_0)=1$, i.e. $g^-(x_0,y_0)+g^+(x_0,y_0)=2$. Since $f(y_0)<1,$ by assumption, we have  $g^-(x_0,y_0)\leq f(y_0)<1$ and thus $$g^+(x_0,y_0)=2-g^-(x_0,y_0)>1,$$
contradicting Proposition \ref{prop:useful}. Conversely, assume that $\{y\in S_X: g(x_0,y)=1\}=\{x_0\}$ and suppose that $x_0\in S_X$ is not an exposed point of $B_X$. Thus, if $f\in S_{X^*}$ with $f(x_0)=\norm{x_0}=1$, there exists $y_0\in S_X$ distinct from $x_0$ such that $f(y_0)=\norm{y_0}=1.$ Note that for any $t\in [0,1]$, we have
$$f\left((1-t)x_0+ty_0\right)=tf(x_0)+(1-t)f(y_0)=1.$$
Since $f\in S_{X^*}$, we have $\norm{(1-t)x_0+ty_0}=1$ for all $t\in [0,1]$. Now,
\begin{align*}
    g^{\pm}(x_0,y_0)&=\lim_{t\to0^\pm}\frac1t\left(\norm{x_0+ty_0}-1\right)\\
    &=\lim_{s\to0^\pm}\frac{(1-s)}{s}\left(\norm{x_0+\frac{s}{1-s}y_0}-1\right)\\
     &=\lim_{s\to0^\pm}\frac1{s}\left(\norm{(1-s)x_0+sy_0}-1+s\right)=1.
\end{align*}
Thus, $g(x_0,y_0)=1$ which contradicts the assumption. Therefore, $x_0\in S_X$ must be an exposed point of $B_X$.
\end{proof}

\begin{theorem}
Let $X$ be a real normed space with two angularly equivalent $\norm{\cdot}_1$ and $\norm{\cdot}_2$. Then, $x/\norm{x}_1$ is an exposed point of $B_{(X,\norm{\cdot}_1)}$ if and only if  $x/\norm{x}_2$ is an exposed point of $B_{(X,\norm{\cdot}_2)}$. 
\end{theorem}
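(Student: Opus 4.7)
The plan is to deduce the theorem directly from Lemma \ref{lemma:exposed-g}, which recasts exposedness as a uniqueness statement about the $g$-functional. By the symmetry of angular equivalence noted after its definition, it suffices to prove one implication, say that exposedness of $x/\norm{x}_1$ in $B_{(X,\norm{\cdot}_1)}$ forces exposedness of $x/\norm{x}_2$ in $B_{(X,\norm{\cdot}_2)}$.

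The first step is a scale-invariance observation: a short computation from the definition shows $g^\pm(\alpha u,\beta v)=\alpha\beta\, g^\pm(u,v)$ for $\alpha,\beta>0$, hence $\cos\theta(\alpha u,\beta v)=\cos\theta(u,v)$, so the norm angle depends only on the rays spanned by its arguments. This lets me reinterpret Lemma \ref{lemma:exposed-g}: writing $x_0\in S_X$, the equality $g(x_0,y)=1$ for $y\in S_X$ is exactly $\cos\theta(x_0,y)=1$, i.e.\ $\tan(\theta(x_0,y)/2)=0$. Thus exposedness of $x_0$ is equivalent to the statement that, for every nonzero $y\in X$, $\theta(x_0,y)=0$ implies $y\in(0,\infty)\cdot x_0$.

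With this in hand, take any $y\in S_{(X,\norm{\cdot}_2)}$ with $g_2(x/\norm{x}_2,y)=1$; the goal is to show $y=x/\norm{x}_2$. Such $y$ satisfies $\tan(\theta_2(x,y)/2)=0$ by scale invariance. Applying the angular equivalence in its symmetric form, there is a constant $C'>0$ with
\[
\tan\left(\frac{\theta_1(x,y)}{2}\right)\leq C'\tan\left(\frac{\theta_2(x,y)}{2}\right)=0,
\]
so $\theta_1(x,y)=0$. Scale invariance then gives $g_1(x/\norm{x}_1,y/\norm{y}_1)=1$, where $y/\norm{y}_1\in S_{(X,\norm{\cdot}_1)}$ (note $\norm{y}_1>0$ by topological equivalence (AE1)). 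The assumed exposedness of $x/\norm{x}_1$, via Lemma \ref{lemma:exposed-g}, yields $y/\norm{y}_1=x/\norm{x}_1$, so $y$ is a positive multiple of $x$; the normalization $\norm{y}_2=1$ then pins it down to $y=x/\norm{x}_2$. Applying Lemma \ref{lemma:exposed-g} once more completes the argument.

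I do not anticipate a substantive obstacle. The only points that require care are verifying the positive-homogeneity of $g^\pm$ in each argument (to legitimize passing between $x$, $x/\norm{x}_1$, and $x/\norm{x}_2$ inside $\theta$), and invoking the angular-equivalence inequality in the correct direction, transferring information from $\norm{\cdot}_2$ back to $\norm{\cdot}_1$, which is why the symmetric form of the relation is essential.
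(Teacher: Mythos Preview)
Your argument is correct and follows essentially the same route as the paper: both proofs use Lemma~\ref{lemma:exposed-g} to identify exposedness of $x_0\in S_X$ with the condition that $\cos\theta(x_0,y)=1$ forces $y$ onto the ray through $x_0$, then invoke the angular-equivalence inequality (in its symmetric form) to transfer $\theta=0$ from one norm to the other, and finally use scale invariance of $\theta$ to pass between the two normalizations. The only cosmetic difference is that the paper phrases the implication as a contrapositive, while you argue directly; your treatment of the homogeneity of $g^\pm$ and the normalization step is in fact slightly more explicit than the paper's.
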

\begin{proof}
Let $C>0$ such that 
$$\frac{1-\cos \theta_1(x,y)}{1+\cos \theta_1(x,y)}\leq C\ \frac{1-\cos \theta_2(x,y)}{1+\cos \theta_2(x,y)}$$
for all $x,y\in X$.
It is sufficient to prove one side of the implication as the reverse implication follows from swapping the roles of $\norm{\cdot}_1$ and $\norm{\cdot}_2$. We argue the contrapositive. Assume that $x_0\in S_{(X,\norm{\cdot}_2)}$ is not an exposed point of $B_{(X,\norm{\cdot}_2)}$. By Lemma \ref{lemma:exposed-g}, there exists $y_0\in S_{(X,\norm{\cdot}_2)}$ distinct from $x_0$ such that $g_2(x_0,y_0)=1,$ i.e. $\cos \theta_2(x_0,y_0)=1$ since $x_0,y_0\in S_{(X,\norm{\cdot}_2)}$. Thus, by angular equivalence,
$$\cos \theta_1(x_0,y_0)=1$$
that is, 
$$g_1\left(\frac{x_0}{\norm{x_0}_1},\frac{y_0}{\norm{y_0}_1}\right)=1.$$
By Lemma \ref{lemma:exposed-g} again, since $\frac{x_0}{\norm{x_0}_1}\neq \frac{y_0}{\norm{y_0}_1}$, $\frac{x_0}{\norm{x_0}_1}$ is not an exposed point of $ B_{(X,\norm{\cdot}_1)}$.
\end{proof}

\section{Dual norms}\label{section:duality}

The following theorem is due to Giles \cite[Theorem 6]{Giles}.
\begin{theorem}[Giles, 1967]\label{thm:riesz}
Let $(X, \norm{\cdot})$ be a smooth and uniformly convex Banach space and $[\cdot, \cdot]$ be a semi-inner product which generates $\norm{\cdot}$. Then for all $f\in X^*$, there exists a unique $x\in X$ such that $f(y)=[y,x]$ for all $y\in X.$
\end{theorem}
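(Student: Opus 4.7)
My plan is to realize $x$ as (a scalar multiple of) a norm-attaining point for $f$, and use the smoothness hypothesis to convert the support functional inequality from Proposition \ref{prop:gateaux-ineq} into an equality involving the $g$-functional, which is exactly the semi-inner product $[\cdot,\cdot]$ of Proposition \ref{prop:unique-g}.

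For existence, I first dispose of the trivial case $f=0$ by taking $x=0$ (using (S5) with $\alpha=0$ on the second argument to see $[y,0]=0$). So assume $f\neq 0$ and, by homogeneity of (S5), I may rescale and look for a unit-norm representer: set $c=\norm{f}$ and look for $x_0\in S_X$ with $f(y)=c\,[y,x_0]$ for all $y$; then $x:=c x_0$ will do the job. Since uniform convexity implies reflexivity, $f$ attains its norm on $B_X$, so pick $x_0\in S_X$ with $f(x_0)=c$. Then $f/c\in S_{X^*}$ supports $B_X$ at $x_0$, and Proposition \ref{prop:gateaux-ineq} gives
\[
G^-(x_0,y)\le \tfrac{1}{c}f(y)\le G^+(x_0,y)\quad \text{for all } y\in X.
\]
Now smoothness enters: by Proposition \ref{prop:unique} (together with the discussion preceding it) $X$ smooth means $g^+\equiv g^-$, equivalently $G^+\equiv G^-$, so the inequality collapses to
\[
\tfrac{1}{c}f(y)=G^\pm(x_0,y)=g(x_0,y)=[y,x_0],
\]
since $\norm{x_0}=1$ makes $g(x_0,y)=\norm{x_0}G(x_0,y)=G(x_0,y)$. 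Multiplying through by $c$ and invoking (S5) gives $f(y)=[y,cx_0]$, as desired.

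For uniqueness, suppose $[y,x_1]=[y,x_2]$ for all $y\in X$. Setting $y=x_1$ gives $\norm{x_1}^2=[x_1,x_2]$, while $y=x_2$ gives $[x_2,x_1]=\norm{x_2}^2$. The Cauchy--Schwarz inequality (S4) then forces $\norm{x_1}^2\le \norm{x_1}\norm{x_2}$ and $\norm{x_2}^2\le\norm{x_1}\norm{x_2}$, so $\norm{x_1}=\norm{x_2}=:r$. If $r=0$ we are done; otherwise the common functional $f(y):=[y,x_1]=[y,x_2]$ has norm exactly $r$ (since $|[y,x_i]|\le\norm{y}r$ and $f(x_i)=r^2$), and it attains its norm on $S_X$ at both $x_1/r$ and $x_2/r$. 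Because uniform convexity implies strict convexity, the norm-attaining point of any unit functional on the unit sphere is unique, whence $x_1/r=x_2/r$ and thus $x_1=x_2$.

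The main technical content is the existence step, and within it the real obstacle is conceptual rather than computational: one has to recognize that reflexivity (for norm attainment) and smoothness (to identify the one-sided derivatives) are \emph{exactly} the two ingredients needed so that the support functional characterization in Proposition \ref{prop:gateaux-ineq} pins down $f$ as a slice of the $g$-functional. Once that identification is made, Proposition \ref{prop:unique-g} immediately rewrites the conclusion in semi-inner product form; strict convexity (for free from uniform convexity) then takes care of uniqueness. Note that uniform convexity is used only via its two consequences, reflexivity and strict convexity, so no quantitative modulus of convexity is needed anywhere in the argument.
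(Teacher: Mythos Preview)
Your proof is correct. Note, however, that the paper does not actually supply its own proof of Theorem~\ref{thm:riesz}; it merely cites Giles and then proves the sharpened version Theorem~\ref{thm:riesz-g} (with uniform convexity replaced by strict convexity plus reflexivity). Comparing your argument to the paper's proof of Theorem~\ref{thm:riesz-g} (which follows Giles's original line), the approaches are genuinely different. The paper works on the \emph{kernel} side: it uses the Chebyshev property (Lemma~\ref{lemma:chebyshev}) to find a nearest point in the null space $N$ of $f$, produces a vector $x_0$ that is $B$-orthogonal to $N$, translates this via Lemma~\ref{lemma:birkhoff} into $g(x_0,\cdot)\equiv 0$ on $N$, and then scales $x_0$ to match $f$. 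Uniqueness is obtained from Lemma~\ref{lemma:sc}. You work instead on the \emph{norm-attainment} side: reflexivity gives a maximizer $x_0\in S_X$ for $f$, Proposition~\ref{prop:gateaux-ineq} sandwiches $f$ between the one-sided G\^ateaux derivatives at $x_0$, and smoothness collapses the sandwich to an equality with $g(x_0,\cdot)=[\cdot,x_0]$; uniqueness then comes from strict convexity via uniqueness of the maximizer. Your route is shorter and avoids the Birkhoff-orthogonality machinery entirely; the paper's route makes the Riesz-type structure (splitting $X$ as $N$ plus a one-dimensional complement) more explicit and dovetails with the semi-inner-product lemmas it has already set up. Your closing remark---that only reflexivity and strict convexity are used---is exactly the observation the paper exploits to pass from Theorem~\ref{thm:riesz} to Theorem~\ref{thm:riesz-g}.
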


One of the tools that is used in proving  Theorem \ref{thm:riesz} is that every closed convex subset in a uniformly convex space is a Chebyshev set. Recall that a non-empty subset $A$ of a metric space $(M,d)$ is a Chebyshev set if for every element $x\in M$, there exists exactly one element $y\in A$ such that
$$d(x,y)=d(x,A):=\inf_{z\in A} d(x,z).$$
However, the assumption of uniform convexity may be replaced by a weaker assumption. This result is due to MM Day (cf. \cite[Corollary 5.1.19]{Megginson}): 
\begin{lemma}[Day, 1941]\label{lemma:chebyshev}
If a normed space is strictly convex and reflexive, then each of its nonempty closed convex subsets is a Chebyshev set. 
\end{lemma}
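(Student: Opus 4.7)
The plan is to prove the lemma by splitting into an existence part, which uses reflexivity together with Mazur's theorem, and a uniqueness part, which uses strict convexity. Given $x \in X$ and a nonempty closed convex $A \subseteq X$, let $d := \inf_{z \in A} \norm{x - z}$. I would first handle the trivial case $d = 0$: closedness of $A$ forces $x \in A$, making $x$ itself the unique nearest point. Henceforth I may assume $d > 0$.

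For existence, I would choose a minimizing sequence $\{y_n\} \subseteq A$ with $\norm{x - y_n} \to d$. Since $\norm{y_n} \leq \norm{x} + \norm{x - y_n}$, this sequence is bounded. Reflexivity of $X$ then yields, via the Banach--Alaoglu/Eberlein--\v{S}mulian machinery, a weakly convergent subsequence $y_{n_k} \rightharpoonup y$ in $X$. Here Mazur's theorem plays its essential role: a convex subset of a normed space is norm closed if and only if it is weakly closed, so $A$ is weakly closed and therefore $y \in A$. Finally, the norm $\norm{x - \cdot}$ is weakly lower semicontinuous, so $\norm{x - y} \leq \liminf_k \norm{x - y_{n_k}} = d$; combined with $\norm{x - y} \geq d$ from the definition of the infimum, equality follows.

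For uniqueness, suppose $y_1, y_2 \in A$ both satisfy $\norm{x - y_1} = \norm{x - y_2} = d$. By convexity $(y_1 + y_2)/2 \in A$, so $\norm{x - (y_1 + y_2)/2} \geq d$. On the other hand, the triangle inequality gives $\norm{x - (y_1 + y_2)/2} = \norm{\tfrac12(x - y_1) + \tfrac12(x - y_2)} \leq d$, so equality must hold. Writing $u_i := (x - y_i)/d \in S_X$ for $i = 1, 2$, this says $\norm{(u_1 + u_2)/2} = 1$, and strict convexity of $X$ then forces $u_1 = u_2$, whence $y_1 = y_2$.

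The main obstacle is conceptual rather than technical: one must correctly match each hypothesis to its role. Reflexivity delivers existence by supplying weak subsequential compactness of bounded sets, Mazur's theorem bridges the norm and weak topologies on convex sets so that the weak limit stays in $A$, and strict convexity is precisely the ingredient needed to rule out two distinct closest points. No substantive calculation is required, but the argument exhibits how these three topological and geometric ingredients combine cleanly.
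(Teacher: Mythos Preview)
Your proof is correct. Note, however, that the paper does not actually supply a proof of this lemma: it states the result and attributes it to Day, with a reference to Megginson's textbook (Corollary~5.1.19). What you have written is precisely the standard argument one finds in that source---reflexivity plus Mazur's theorem for existence, strict convexity for uniqueness---so your proposal fills in exactly the proof the paper defers to the literature.

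One small remark worth making explicit: the hypothesis says ``normed space'' rather than ``Banach space,'' but a reflexive normed space is automatically complete (since it is isometric to its bidual, which is always a Banach space), so your appeal to Eberlein--\v{S}mulian is legitimate.
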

\noindent Recall that uniform convexity implies strict convexity and reflexivity. We prove a version of Theorem \ref{thm:riesz} by replacing uniform convexity with strict convexity and reflexivity and reformulate it in terms of the $g$-functional. We first state some results from  \cite{Dragomir} and \cite{Giles} which are reformulated in terms of the $g$-functional, with the aid of Proposition \ref{prop:unique-g}. Recall that, from Proposition \ref{prop:unique-g}, when $X$ is a smooth normed space, then the $g$-functional gives rise to a unique semi-inner product given by
$$[x,y]=g(y,x),\quad x,y\in X.$$
In a normed space $(X, \norm{\cdot})$ over the field $\mathbb{K}$, $x\in X$ is said to be $B$-orthogonal to $y\in X$ if $\norm{x+\lambda y}\geq \norm{x}$ for all $\lambda\in \mathbb{K}$. In the usual manner, we say that $x\in X$ is $B$-orthogonal to a subset $Y\subseteq X$, if $x$ is $B$-orthogonal to every $y\in Y.$ We restate the following results from \cite{Giles}, in terms of the $g$-functional, instead of a semi-inner product (via Proposition \ref{prop:unique-g}).
\begin{lemma}[Giles \cite{Giles}, Theorem 2]\label{lemma:birkhoff}
If $(X,\norm{\cdot})$ is smooth normed space over $\mathbb{K}$, then $g(x,y)=0$ if and only if $x$ is $B$-orthogonal to $y$.
\end{lemma}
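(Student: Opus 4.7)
The plan is to view everything through the real-valued convex function $\phi(t):=\norm{x+ty}$, $t\in\mathbb{R}$, whose convexity follows from the triangle inequality and positive homogeneity of the norm. The case $x=0$ is trivial ($B$-orthogonality holds vacuously and $g(0,y)=0$ by definition), so I will assume $x\neq 0$. Writing $g^{\pm}(x,y)=\norm{x}G^{\pm}(x,y)$ and noting that the one-sided derivatives of $\phi$ at $0$ are exactly $\phi'_{\pm}(0)=G^{\pm}(x,y)$, the statement reduces to showing that, under smoothness, $\phi'(0)=0$ is equivalent to $\phi$ attaining its minimum at $0$.

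For the forward direction, I would assume $x$ is $B$-orthogonal to $y$, so $\phi(t)\geq \phi(0)$ for every $t\in\mathbb{R}$. Then the difference quotient $\tfrac{\phi(t)-\phi(0)}{t}$ is nonnegative for $t>0$ and nonpositive for $t<0$, which forces $G^+(x,y)\geq 0\geq G^-(x,y)$. Smoothness gives $G^+(x,y)=G^-(x,y)$, so both one-sided derivatives vanish and hence $g(x,y)=\tfrac12\norm{x}(G^+(x,y)+G^-(x,y))=0$.

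For the converse, I would assume $g(x,y)=0$. Smoothness again gives $g^+(x,y)=g^-(x,y)$, so their common value equals $g(x,y)=0$, and thus $\phi$ is differentiable at $0$ with $\phi'(0)=0$. Since $\phi$ is convex, the subdifferential inequality $\phi(t)\geq \phi(0)+\phi'(0)(t-0)=\phi(0)$ holds for every $t\in\mathbb{R}$. Setting $t=\lambda$ recovers $\norm{x+\lambda y}\geq\norm{x}$ for all $\lambda\in\mathbb{R}$, which is $B$-orthogonality of $x$ to $y$.

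The only delicate point is the appeal to smoothness to identify $g^+$ and $g^-$, since without it the forward step only gives sign information on the two one-sided derivatives, and the converse step cannot conclude that a two-sided derivative of $\phi$ exists at $0$. Everything else is routine manipulation of the convex function $\phi$, so I expect no further obstacle.
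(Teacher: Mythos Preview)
The paper does not give its own proof of this lemma; it is simply quoted from Giles \cite{Giles} as a known result and then used as a black box in Theorem~\ref{thm:riesz-g}. Your argument via the convex function $\phi(t)=\norm{x+ty}$ is correct and is essentially the standard one: $B$-orthogonality is exactly the statement that $\phi$ attains its global minimum at $0$, smoothness makes $\phi$ differentiable at $0$, and a convex function on $\mathbb{R}$ has a minimum precisely where its (one-sided) derivatives vanish. Nothing more is needed.

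One caveat worth flagging. The lemma as stated allows $\mathbb{K}=\mathbb{C}$, in which case $B$-orthogonality demands $\norm{x+\lambda y}\geq\norm{x}$ for all \emph{complex} $\lambda$. Your converse direction only yields this for real $\lambda$, since $\phi$ is a function of a real variable and $g$ is real-valued. Over $\mathbb{C}$ the condition $g(x,y)=0$ alone does not force complex $B$-orthogonality; Giles' actual Theorem~2 is stated for the complex semi-inner product $[y,x]$, whose vanishing encodes two real conditions rather than one. Since the paper works over $\mathbb{R}$ throughout (cf.\ Proposition~\ref{prop:unique-g} and the setting of Theorem~\ref{thm:riesz-g}), this is a looseness in the lemma's wording rather than a gap in your argument, but it would be worth restricting your proof explicitly to $\mathbb{K}=\mathbb{R}$.
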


\begin{lemma}[Giles \cite{Giles}, Lemma 5]\label{lemma:sc}
Let $(X,\norm{\cdot})$ be a smooth normed space over reals. Then $X$ is strictly convex if and only if for any nonzero $x,y\in X$, if $g(x,y)=\norm{x}\norm{y}$, then $y=\lambda x$ for some real number $\lambda>0.$ 
\end{lemma}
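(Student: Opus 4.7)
The plan is to prove both directions by leveraging the linearity of $g$ in its second argument, which is guaranteed by smoothness via Proposition \ref{prop:unique-g}, together with the sandwich inequality in Proposition \ref{prop:useful} and the semi-inner product Cauchy--Schwarz inequality (S4).

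For the forward implication, I would assume $X$ is strictly convex and take nonzero $x, y$ with $g(x, y) = \norm{x}\norm{y}$. Smoothness gives $g(x,y) = g^+(x,y)$, so the upper bound in Proposition \ref{prop:useful} reads $\norm{x}\norm{y} = g(x,y) \leq \norm{x}(\norm{x+y}-\norm{x})$. Rearranging yields $\norm{x+y} \geq \norm{x} + \norm{y}$, and combined with the triangle inequality this forces $\norm{x+y} = \norm{x}+\norm{y}$. Strict convexity then delivers $y = \lambda x$ for some $\lambda > 0$.

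For the converse I would assume the stated "equality in Cauchy--Schwarz implies positive scalar multiple" hypothesis and prove strict convexity. Take nonzero $x, y$ with $\norm{x+y} = \norm{x} + \norm{y}$, so in particular $x+y \neq 0$. Using linearity of $g$ in the second slot,
\[
\norm{x+y}^2 = g(x+y, x+y) = g(x+y, x) + g(x+y, y).
\]
Each summand is bounded via Cauchy--Schwarz by $g(x+y, x) \leq \norm{x+y}\norm{x}$ and $g(x+y, y) \leq \norm{x+y}\norm{y}$, and the sum of these upper bounds is exactly $\norm{x+y}(\norm{x}+\norm{y}) = \norm{x+y}^2$. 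Hence both inequalities collapse to equalities; in particular $g(x+y, x) = \norm{x+y}\norm{x}$. Applying the hypothesis to the pair $(x+y, x)$ yields $x = \lambda (x+y)$ for some $\lambda > 0$, and hence $y = \mu x$ with $\mu = (1-\lambda)/\lambda$.

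The only delicate point, and the step I expect to be the main obstacle, is verifying that $\mu > 0$, i.e.\ that $0 < \lambda < 1$. The case $\lambda = 1$ is ruled out because it forces $y = 0$. If $\lambda > 1$, then $\mu < 0$, and a direct computation shows $\norm{x+y} = |1+\mu|\,\norm{x}$ cannot equal $\norm{x} + \norm{y} = (1-\mu)\,\norm{x}$ unless $\mu = 0$, giving a contradiction in every subcase of $\mu \in (-1,0) \cup (-\infty,-1]$. This forces $\mu > 0$, finishing strict convexity. The key conceptual ingredient throughout is that smoothness is precisely what permits the additive split of $\norm{x+y}^2$ into $g(x+y,x) + g(x+y,y)$; without it, Proposition \ref{prop:subadditive} only yields sub- or super-additivity, and the chain of inequalities could not be forced to collapse to equalities.
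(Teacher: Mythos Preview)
The paper does not actually supply a proof of this lemma: it is stated as a restatement of Giles' Lemma~5 and is immediately followed by the next result without any argument. So there is no ``paper's own proof'' to compare against.

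Your argument is correct. The forward direction is clean: smoothness gives $g=g^+$, Proposition~\ref{prop:useful} forces equality in the triangle inequality, and strict convexity finishes it. The converse is also sound: the additive split $\norm{x+y}^2=g(x+y,x)+g(x+y,y)$ is exactly where smoothness (via Proposition~\ref{prop:unique-g}) is used, and the squeeze against the Cauchy--Schwarz bounds forces $g(x+y,x)=\norm{x+y}\norm{x}$, after which the hypothesis applies to the pair $(x+y,x)$. Your case analysis ruling out $\mu\le 0$ is complete; in fact it can be shortened by observing that $|1+\mu|=1+|\mu|$ forces $\mu\ge 0$, and $\mu=0$ is excluded since $y\neq 0$. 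One small stylistic point: in the forward direction you could alternatively apply the hypothesis of the converse symmetrically to $g(x+y,y)=\norm{x+y}\norm{y}$ as well, obtaining $y=\lambda'(x+y)$ directly and avoiding the sign discussion altogether, but what you wrote is already fine.
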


We now restate Theorem 6 of Giles \cite{Giles} (Theorem \ref{thm:riesz} above) with a weaker assumption of strict convexity and reflexivity in place of uniform convexity. 
\begin{theorem}\label{thm:riesz-g}
Let $(X, \norm{\cdot})$ be a smooth, strictly convex, and reflexive space. Then for all $f\in X^*$, there exists a unique $x\in X$ such that $f(y)=g(x,y)$ for all $y\in X.$ Furthermore, $\norm{f}=\norm{x}$.
\end{theorem}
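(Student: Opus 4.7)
The plan is to mirror Giles' original argument for Theorem \ref{thm:riesz}, replacing each use of uniform convexity by the weaker hypotheses via Lemma \ref{lemma:chebyshev}. The case $f=0$ is handled by taking $x=0$. Assuming $f\neq 0$, I set $Y:=\ker f$, which is a nonempty closed convex (in fact, closed linear) subset of $X$; by strict convexity and reflexivity, Lemma \ref{lemma:chebyshev} makes $Y$ a Chebyshev set. Choosing any $z\in X$ with $f(z)\neq 0$, let $y_0\in Y$ be the unique nearest point in $Y$ to $z$, and set $w:=z-y_0$, so $w\neq 0$ and $f(w)=f(z)\neq 0$. For any $y\in Y$ and any scalar $\lambda$, the element $y_0-\lambda y$ again lies in $Y$, so the nearest-point property gives $\norm{w+\lambda y}\geq\norm{w}$; that is, $w$ is $B$-orthogonal to $Y$.

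Since $X$ is smooth, Lemma \ref{lemma:birkhoff} converts this into $g(w,y)=0$ for every $y\in Y$. Any $y\in X$ admits the decomposition
\[
y=\frac{f(y)}{f(w)}\,w + y', \qquad y'\in Y,
\]
and the linearity of $g(w,\cdot)$ (available from smoothness via Proposition \ref{prop:unique-g}) together with $g(w,w)=\norm{w}^2$ yields $g(w,y)=(f(y)/f(w))\,\norm{w}^2$. Setting
\[
x:=\frac{f(w)}{\norm{w}^2}\,w
\]
and using the real homogeneity of $g$ in its first argument (property (S5) translated via Proposition \ref{prop:unique-g}) produces $g(x,y)=f(y)$ for every $y\in X$.

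For uniqueness, suppose $g(x_1,y)=g(x_2,y)=f(y)$ for all $y$. Evaluating at $y=x_1$ gives $\norm{x_1}^2=g(x_2,x_1)\leq\norm{x_2}\norm{x_1}$ by the Cauchy--Schwarz inequality (S4), so $\norm{x_1}\leq\norm{x_2}$; by symmetry $\norm{x_1}=\norm{x_2}$. Then the equality case $g(x_2,x_1)=\norm{x_1}\norm{x_2}$ holds, and Lemma \ref{lemma:sc} forces $x_1=\lambda x_2$ with $\lambda>0$, which combined with the equality of norms gives $\lambda=1$ and $x_1=x_2$. The norm identity $\norm{f}=\norm{x}$ follows from $|f(y)|=|g(x,y)|\leq\norm{x}\norm{y}$ on the one hand and the test value $f(x)=g(x,x)=\norm{x}^2$ on the other.

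The main step to verify carefully is the Chebyshev-projection step: Giles' original proof relied on uniform convexity to guarantee existence and uniqueness of the nearest point in $Y$, and here I must lean on Day's result (Lemma \ref{lemma:chebyshev}) to secure the same conclusion from the strictly weaker combination of strict convexity and reflexivity. Once that substitution is justified, the rest of the argument is a direct translation of Giles' scheme into the $g$-functional language, with smoothness (via Proposition \ref{prop:unique-g}) supplying the linearity of $g$ in its second argument and strict convexity (via Lemma \ref{lemma:sc}) delivering uniqueness.
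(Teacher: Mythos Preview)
Your proof is correct and follows essentially the same route as the paper's: both handle $f=0$ trivially, then for $f\neq 0$ invoke Lemma \ref{lemma:chebyshev} on the kernel to obtain a nearest point, produce a nonzero vector $B$-orthogonal to the kernel, translate this via Lemma \ref{lemma:birkhoff} into $g$-orthogonality, define $x$ by the same scalar multiple, and conclude uniqueness and the norm identity through Lemma \ref{lemma:sc} and the Cauchy--Schwarz bound. If anything, your explicit decomposition $y=\tfrac{f(y)}{f(w)}w+y'$ and your appeal to Proposition \ref{prop:unique-g} for the linearity of $g(w,\cdot)$ make the existence step a little cleaner than the paper's slightly informal ``any $y\in X$ can be written as $y=z_0+x_0$''.
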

\begin{proof}
If $f(y)=0$ for all $y\in X$, then we choose $x=0.$ If $f(y)\neq 0$ for some $y\in X$, then the null space of $N$ of $f$ is a proper closed subspace of $X$. Thus, by Lemma \ref{lemma:chebyshev} there exists a unique nonzero vector $z_0\in N$ such that $\norm{y-z_0}=\inf_{z\in N}\norm{y-z}.$ Writing $x_0=y-z_0$, we get
$\norm{x_0}\leq \norm{x_0+z}$
for all $z\in N$, that is $x_0$ is $(B)$-orthogonal to $z$ for all $z\in N$. By Lemma \ref{lemma:birkhoff}, $g(x_0,z)=0$ for all $z\in N.$ We make the following observations: 
\begin{enumerate}[(1)]
    \item If $z_0\in N$, then $f(z)=0=g(x,z_0)$, for any $x=\alpha x_0$ with $\alpha \in \real$. 
    \item Observe that  
    $$f(x_0)=g\left(\frac{f(x_0)}{\norm{x_0}^2}x_0,x_0\right).$$
    So $f(x_0)=g(x,x_0)$ for $x=\frac{f(x_0)}{\norm{x_0}^2}x_0.$
\end{enumerate}
Thus, any $y\in X$ can be written as $y=z_0+ x_0$, where $z_0\in N$, and $0\neq x_0\in X$ is such that $g(x_0, z)=0$ for all $z\in N$. Set $x=\frac{f(x_0)}{\norm{x_0}^2}x_0$. Since $z_0\in N,$ observation (1) gives us $f(z_0)=g(x,z_0)$ and (2) give us 
$f(x_0)=g(x,x_0)$. Therefore, 
\begin{eqnarray*}
f(y)&=&f(z_0+ x_0)\\
&=&f(z_0)+f(x_0)\\
&=&g(x,z_0)+g(x,x_0)=g(x,z_0+ x_0)=g(x,y).
\end{eqnarray*}
To prove uniqueness, let $x,x'\in X$, $x\neq x'$ such that $f(y)=g(x,y)$ and $f(y)=g(x',y)$ for all $y\in X.$ Then, 
$$\norm{x}^2=|g(x,x)|=|g(x',x)|\leq \norm{x'} \norm {x}$$
so $\norm{x}\leq \norm{x'}$ and 
$$\norm{x'}^2=|g(x',x')|=|g(x,x')|\leq \norm{x} \norm {x'}$$
so $\norm{x'}\leq \norm{x}$. Thus, $\norm{x'}= \norm{x}$, and 
$$\norm{x}^2=g(x',x)$$
gives us 
$$\norm{x}\norm{x'}=g(x',x)$$
and so by Lemma \ref{lemma:sc}, we conclude that $x=\lambda x'$. Combining this with $\norm{x'}= \norm{x}$, we conclude that $x=x'.$ Finally, 
$$|f(y)|=|g(x,y)|\leq \|x\|\|y\|$$
and so 
$$\norm{f}=\sup_{0\neq y\in X}\frac{|f(y)|}{\norm{y}}\leq\norm{x},$$
and $$\norm{x}^2=|g(x,x)|=|f(x)|\leq \norm{f}\norm{x}$$
so $\norm{x}\leq \norm{f}.$ This completes the proof. 
\end{proof}

We now restate Theorem 7 of Giles \cite{Giles} in terms of the $g$-functional. 
\begin{corollary}\label{cor:sip-dual-g}
Let $(X,\norm{\cdot})$ be a normed space. Assume that $X$ is smooth, strictly convex, and reflexive. Then, the dual space $X^*$ is smooth, strictly convex, and reflexive; and the $g$-functional on $X^*$, is given by
$$g(\phi,\psi)=g(x_{\psi},x_{\phi}), \quad \text{ for any } \phi,\psi\in X^*,$$
where $x_\phi$ and $x_\psi$ in $X$ are associated to  $\phi$ and $\psi$, respectively, as given in Theorem \ref{thm:riesz-g}.
\end{corollary}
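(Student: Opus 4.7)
The plan is to prove the structural properties of $X^*$ first and then derive the $g$-functional formula. Since $X$ is reflexive, so is $X^*$, by the standard fact that reflexivity is preserved under duality. For smoothness and strict convexity of $X^*$, I would invoke the \v{S}mulian-type duality implications: the dual of a smooth (respectively, strictly convex) space is strictly convex (respectively, smooth). Applying these to $X^{**} = X$ (via reflexivity) gives: $X$ smooth $\Rightarrow$ $X^*$ strictly convex, and $X$ strictly convex $\Rightarrow$ $X^*$ smooth.

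For the formula, the plan is to fix $\phi \in X^*$, show that $\psi \mapsto g_{X^*}(\phi, \psi)$ is a bounded linear functional on $X^*$, use reflexivity of $X$ to realise it as evaluation at some $\xi_\phi \in X$, and then identify $\xi_\phi$ with $x_\phi$. Linearity in $\psi$ follows from Proposition \ref{prop:unique-g} applied to the (now smooth) space $X^*$, since the associated semi-inner product is linear in its first argument. Boundedness $|g_{X^*}(\phi, \psi)| \leq \norm{\phi}\norm{\psi}$ is property (S4) of the semi-inner product. Thus $g_{X^*}(\phi, \cdot)$ lies in $X^{**}$, and reflexivity of $X$ yields a unique $\xi_\phi \in X$ with $g_{X^*}(\phi, \psi) = \psi(\xi_\phi)$ for every $\psi \in X^*$.

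To identify $\xi_\phi$ with $x_\phi$, I would set $\psi = \phi$ to obtain $\phi(\xi_\phi) = g_{X^*}(\phi, \phi) = \norm{\phi}^2$, then combine $|\psi(\xi_\phi)| \leq \norm{\phi}\norm{\psi}$ with $\norm{\phi}^2 = \phi(\xi_\phi) \leq \norm{\phi}\norm{\xi_\phi}$ to force $\norm{\xi_\phi} = \norm{\phi} = \norm{x_\phi}$ (using Theorem \ref{thm:riesz-g}). By the representation $\phi(y) = g_X(x_\phi, y)$, this reads $g_X(x_\phi, \xi_\phi) = \norm{x_\phi}\norm{\xi_\phi}$, whereupon Lemma \ref{lemma:sc} (strict convexity of $X$) gives $\xi_\phi = \lambda x_\phi$ for some $\lambda > 0$, and the matching norms force $\lambda = 1$. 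Substituting $\xi_\phi = x_\phi$ and then applying $\psi(x_\phi) = g_X(x_\psi, x_\phi)$ from the representation of $\psi$ yields the desired formula $g_{X^*}(\phi, \psi) = g_X(x_\psi, x_\phi)$. I expect this identification step to be the main obstacle, as it must thread together the norm equality, the equality case of the $g$-Cauchy--Schwarz inequality (Lemma \ref{lemma:sc}), and the Riesz-type representations on both $X$ and $X^*$; by contrast, the duality statements in the first paragraph are standard consequences of \v{S}mulian's theorem, and the linear-functional construction in the second paragraph is immediate once $X^*$ is known to be smooth.
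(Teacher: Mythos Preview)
Your proof is correct but follows a different route from the paper's. The paper proceeds ``bottom-up'': it \emph{defines} a pairing $[\phi,\psi]:=g_X(x_\phi,x_\psi)=\phi(x_\psi)$ on $X^*$, verifies the semi-inner-product axioms directly (linearity in the first slot from linearity of $\phi$, homogeneity from $x_{\alpha\phi}=\alpha x_\phi$, and (S3)--(S4) from $\norm{\phi}=\norm{x_\phi}$), and then invokes uniqueness of the semi-inner product on the smooth space $X^*$ (Proposition~\ref{prop:unique-g}) to conclude that this pairing \emph{is} $g_{X^*}$. Your argument is ``top-down'': you start from $g_{X^*}$ itself, use smoothness of $X^*$ to make $g_{X^*}(\phi,\cdot)$ linear, realise it via reflexivity of $X$ as evaluation at some $\xi_\phi$, and then identify $\xi_\phi=x_\phi$ through the equality case of the $g$-Cauchy--Schwarz inequality (Lemma~\ref{lemma:sc}) together with the norm equality $\norm{\xi_\phi}=\norm{\phi}=\norm{x_\phi}$. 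The paper's approach is slightly more elementary---pure axiom checking, with strict convexity of $X$ playing no role beyond making $X^*$ smooth---whereas your approach is more structural, explicitly using reflexivity as a representation device and strict convexity of $X$ for the identification. One small omission: your identification step tacitly assumes $\phi\neq 0$ (Lemma~\ref{lemma:sc} is stated for nonzero vectors); the case $\phi=0$ is trivial but should be mentioned.
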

\begin{proof}
By reflexivity of $X$, it follows that $X^*$ is reflexive, and since $X$ is smooth and strictly convex, $X^*$ is smooth and strictly convex. Let $\phi,\psi\in X^*$. By Theorem \ref{thm:riesz-g}, there exist $x_\phi,x_\psi\in X$ such that 
$$\phi(z)=g(x_{\phi},z) \quad\text{and}\quad \psi(z)=g(x_{\psi},z), \quad \text{for all }z\in X,$$
with $\norm{\phi}=\norm{x_\phi}$ and $\norm{\psi}=\norm{x_\psi}$. Define $[\cdot,\cdot]\colon X\times X \to\real$ by
$$[\phi,\psi]:=g(x_\phi,x_\psi),\quad \text{for any }\phi,\psi\in X^*. $$
It is sufficient to show that $[\cdot,\cdot]$ is a semi-inner product on $X^*$, since  smoothness of $X^*$, implies that $[\cdot,\cdot]$ is the unique semi-inner product on $X^*$ which in turn implies that the $g$-functional in $X^*$ is given by 
$$g(\psi,\phi)=[\phi,\psi]=g(x_\phi,x_\psi),\quad \text{for any } \phi,\psi\in X^*,$$
as desired. Let $\phi,\psi,\tau\in X^*$ and $\alpha,\beta\in \real$. Firstly we note the following, 
$$[\phi,\psi]=g(x_\phi,x_\psi)=\phi(x_\psi).$$
Now we show that $[\cdot,\cdot]$ satisfies the properties of semi-inner product. We have
$$[\phi+\psi,\tau]=(\phi+\psi)(x_\tau)=\phi(x_\tau)+\psi(x_\tau)=g(x_{\phi},x_{\tau})+g(x_{\psi},x_{\tau})=[\phi,\tau]+[\psi,\tau].$$
Next, we note that for all $z\in X$,
$$(\alpha\phi)(z)=\alpha\phi(z)=\alpha g(x_{\phi},z)=g(\alpha x_{\phi},z),$$
that is, a one-to-one correspondence between $\alpha\phi\in X^*$ with $\alpha x_{\phi}\in X.$
Thus 
$$[\alpha\phi,\beta\psi]=g(\alpha x_{\phi},\beta x_{\psi})=\alpha\beta g(x_\phi,x_{\psi})=\alpha \beta [\phi,\psi].$$
Next, we have
$$[\phi,\phi]=g(x_\phi,x_\phi)=\norm{x_\phi}^2=\norm{\phi}^2.$$
Thus, $[\phi,\phi]=\norm{\phi}^2\geq 0$ and $[\phi,\phi]=0$ implies $\norm{\phi}^2=0$, so $\norm{\phi}=0.$ 
Finally,
$$|[\phi,\psi]|=|g(x_\phi,x_\psi)|\leq\norm{x_\phi} \norm{x_\psi}=\norm{\phi}\norm{\psi}.$$
This completes the proof.
\end{proof}

\begin{theorem}\label{thm:AE-dual}
Let $X$ be a normed space with two norms $\norm{\cdot}_1$ and $\norm{\cdot}_2$ that are both strictly convex, smooth, and reflexive, and that both norms are angularly equivalent. Then, the dual norms 
$\norm{\cdot}^*_1$ and $\norm{\cdot}^*_2$ are also angularly equivalent. 
\end{theorem}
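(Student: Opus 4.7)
The plan is to reduce the dual-norm claim to a primal-norm claim via Corollary~\ref{cor:sip-dual-g} and then invoke the angular equivalence hypothesis on $X$. Under the hypotheses, $X^*$ also inherits smoothness, strict convexity, and reflexivity; Theorem~\ref{thm:riesz-g} then produces, for each $\phi\in X^*$ and each $i\in\{1,2\}$, a unique $u_i=J_i(\phi)\in X$ satisfying $\phi(z)=g_i(u_i,z)$ for all $z\in X$ and $\|u_i\|_i=\|\phi\|_i^*$. Writing $v_i=J_i(\psi)$ similarly, Corollary~\ref{cor:sip-dual-g} gives $\cos\theta_i^*(\phi,\psi)=\cos\theta_i(v_i,u_i)$, so the theorem reduces to the primal estimate
$$\tan\!\bigl(\theta_2(v_2,u_2)/2\bigr)\le C\,\tan\!\bigl(\theta_1(v_1,u_1)/2\bigr)\quad\text{for all nonzero }\phi,\psi\in X^*.$$

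Angular equivalence on $X$ applied directly to the pair $(v_1,u_1)$ gives $\tan(\theta_2(v_1,u_1)/2)\le C_0\tan(\theta_1^*(\phi,\psi)/2)$ for free, so the problem becomes one of comparing $\theta_2(v_2,u_2)$ with $\theta_2(v_1,u_1)$. The key identities are $\psi(u_j)=g_1(v_1,u_j)=g_2(v_2,u_j)$ for $j=1,2$, which follow from the two representations of $\psi$. Combined with the additivity of $g_i$ in its second argument (which holds by smoothness, Proposition~\ref{prop:unique-g}), they yield the exact decomposition
$$\|u_2\|_2\|v_2\|_2\,\cos\theta_2^*(\phi,\psi)=\|u_1\|_1\|v_1\|_1\,\cos\theta_1^*(\phi,\psi)+\psi(u_2-u_1),$$
together with its analogue obtained by swapping $\phi\leftrightarrow\psi$. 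The leading coefficient $\|\phi\|_1^*\|\psi\|_1^*/(\|\phi\|_2^*\|\psi\|_2^*)$ is pinched above and below by constants coming from the topological equivalence of the dual norms (itself a consequence of angular equivalence on $X$), and the two identities together will determine the residual $\psi(u_2-u_1)$ in terms of the quantities appearing in $\cos\theta_i^*$.

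The main obstacle is closing this comparison uniformly in $(\phi,\psi)$: the change-of-representer map $J_2\circ J_1^{-1}\colon X\to X$ is merely positively homogeneous (not linear and not norm-to-norm continuous without further hypotheses), so $\theta_2(v_2,u_2)$ and $\theta_2(v_1,u_1)$ are not pointwise comparable by any simple Lipschitz estimate. To overcome this I would argue by contradiction: if no uniform constant $C$ existed, extract sequences $(\phi_n,\psi_n)$ with $\|\phi_n\|_1^*=\|\psi_n\|_1^*=1$ along which $\tan(\theta_2^*/2)/\tan(\theta_1^*/2)\to\infty$; reflexivity yields weakly convergent subsequences of $u_1^{(n)},v_1^{(n)},u_2^{(n)},v_2^{(n)}$, and smoothness (uniqueness of the norming functional via Lemma~\ref{lemma:birkhoff}) together with strict convexity (Lemma~\ref{lemma:sc}) should force the limiting configuration to contradict the divergence. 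It is precisely this step where the joint strength of smoothness, strict convexity, and reflexivity, beyond mere topological equivalence of the norms, is indispensable.
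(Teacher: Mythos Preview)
Your reduction via Corollary~\ref{cor:sip-dual-g} to $\cos\theta_i^*(\phi,\psi)=\cos\theta_i\bigl(J_i(\psi),J_i(\phi)\bigr)$ is exactly how the paper begins, and your identification of the obstacle is sound: the duality maps $J_1$ and $J_2$ differ in general, so the primal angular-equivalence inequality---which compares $\theta_1$ and $\theta_2$ at the \emph{same} pair of vectors---does not immediately give a comparison between $\theta_1\bigl(J_1(\psi),J_1(\phi)\bigr)$ and $\theta_2\bigl(J_2(\psi),J_2(\phi)\bigr)$. The paper's proof is much shorter than yours precisely because it elides this point: it selects $x_\phi$ with $\phi=g_1(x_\phi,\cdot)$ and $x_\psi$ with $\psi=g_2(x_\psi,\cdot)$, then asserts $g_i^*(\phi,\psi)=g_i(x_\psi,x_\phi)$ for \emph{both} $i=1,2$. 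Corollary~\ref{cor:sip-dual-g}, however, only delivers $g_i^*(\phi,\psi)=g_i\bigl(J_i(\psi),J_i(\phi)\bigr)$, so that assertion would require $x_\psi=J_1(\psi)$ and $x_\phi=J_2(\phi)$ as well, which is never established. The difficulty you isolate is therefore genuine, and the paper's argument, as written, does not visibly overcome it.

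That said, your proposed remedy does not close the gap either. The contradiction sketch via weak compactness is a hope, not a proof: the duality maps $J_i$ are nonlinear and only norm-to-weak$^*$ continuous, and strict convexity and smoothness by themselves supply no modulus that would let you pass to the limit in $g_2\bigl(v_2^{(n)},u_2^{(n)}\bigr)$ or relate it uniformly to $g_2\bigl(v_1^{(n)},u_1^{(n)}\bigr)$. Absent uniform convexity or uniform smoothness there is no mechanism forcing the weakly-limiting configuration to contradict the assumed blow-up of the ratio of half-angle tangents; weak (non-strong) convergence on the sphere is entirely compatible with $\cos\theta_2\to 1$ while $\cos\theta_1$ stays bounded away from $1$. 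So your proposal correctly diagnoses the difficulty but does not resolve it, and neither does the paper's shorter argument.
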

\begin{proof}
Denote by $g_1$ and $g_2$, the $g$-functional associated to the norm $\norm{\cdot}_1$ and $\norm{\cdot}_2$, respectively. By the assumption of angular equivalence, there exists $C>0$ such that 
$$\frac{1-g_2(x,y)}{1+g_2(x,y)}\leq C \frac{1-g_1(x,y)}{1+g_1(x,y)},$$
for any $x,y\in X$. Take two elements $\phi$ and $\psi$ of the dual space $X^*$. By Theorem \ref{thm:riesz-g}, there exists $x_{\phi}$ and $x_{\psi}$ in $X$ such that
$$\phi(y)=g_1(x_{\phi},y)\quad \text{and}\quad \psi(y)=g_2(x_{\psi},y), \quad \text{for all } y\in X.$$
Thus, we have
\begin{equation}\label{eq:ineq-ae}
\frac{1-g_2(x_{\psi},x_{\phi})}{1+g_2(x_{\psi},x_{\phi})}\leq C \frac{1-g_1(x_{\psi},x_{\phi})}{1+g_1(x_{\psi},x_{\phi})}.
\end{equation}
By Corollary \ref{cor:sip-dual-g}, we have the $g$-functionals on $(X^*,\norm{\cdot}^*_1)$ and $(X^*,\norm{\cdot}^*_2)$, denoted by $g^*_1$ and $g^*_2$, are given by
$$g^*_i(\phi,\psi)=g_i(x_\psi,x_\phi), \quad i=1,2.$$
Consequently, \eqref{eq:ineq-ae} becomes
$$\frac{1-g^*_2(\phi,\psi)}{1+g^*_2(\phi,\psi)}\leq C \frac{1-g^*_1(\phi,\psi)}{1+g^*_1(\phi,\psi)}$$
which shows that the dual norms $\norm{\cdot}^*_1$ and $\norm{\cdot}^*_2$ are also angularly equivalent. \end{proof}

\section{Discussion}
The assumptions of Theorem \ref{thm:AE-dual} are as follows.
\begin{enumerate}[({A}1)]
    \item A real vector space $X$ with two angularly equivalent norms $\norm{\cdot}_1$ and $\norm{\cdot}_2.$
    \item Both $\norm{\cdot}_1$ and $\norm{\cdot}_2$ are strictly convex. 
    \item Both $\norm{\cdot}_1$ and $\norm{\cdot}_2$ are smooth.
    \item Both $\norm{\cdot}_1$ and $\norm{\cdot}_2$ are reflexive. 
\end{enumerate}
Corollary 2.2 of \cite{angular} states that angular equivalence preserves strict convexity and thus (A2) may be weakened to only requiring one of the norms to be strictly convex. This led to the following questions: 
\begin{enumerate}[(Q1)]
    \item Does angular equivalence preserves smoothness? 
    \item Does angular equivalence preserves reflexivity?  
\end{enumerate}

Note also that the statement of Theorem \ref{thm:AE-dual} remains true, when the assumptions (A2)-(A4) are changed to the following.  
\begin{enumerate}[({A}1*)]\setcounter{enumi}{1}
    \item Both $\norm{\cdot}_1$ and $\norm{\cdot}_2$ are uniformly convex. 
    \item Both $\norm{\cdot}_1$ and $\norm{\cdot}_2$ are uniformly smooth. 
\end{enumerate}
By Corollary 2.7 of \cite{angular}, since angular equivalence preserves uniform convexity, (A1*) may be weakened to only requiring that one of the norms to be uniformly convex. This led to the question: 
\begin{enumerate}[(Q1)]\setcounter{enumi}{2}
    \item Does angular equivalence preserves uniform smoothness? 
\end{enumerate}
An affirmative answer to (Q1)-(Q3) will strengthen the result of Theorem \ref{thm:AE-dual}.

\end{document}